\algrenewcommand\algorithmicrequire{\textbf{Input:}}
\algrenewcommand\algorithmicensure{\textbf{Output:}}
\newtheorem{theorem}{Theorem}[section]
\newtheorem{lemma}[theorem]{Lemma}
\newtheorem{proposition}[theorem]{Proposition}
\newenvironment{proof}{\begin{trivlist}
    \item[\hskip\labelsep{\bf Proof.}]}{$\hfill\Box$\end{trivlist}}
\theoremstyle{plain} \theorembodyfont{\rmfamily}
\newtheorem{example}[theorem]{Example}
\numberwithin{equation}{section}
\numberwithin{figure}{section}
\numberwithin{table}{section}
      \newcommand{\tgn}[1]{{(#1)_{t\geq 0}}}
\renewcommand{\hat}{\widehat}
\newcommand{\vertiii}[1]{{\left\vert\kern-0.25ex\left\vert\kern-0.25ex\left\vert #1 
\right\vert\kern-0.25ex\right\vert\kern-0.25ex\right\vert}}
\title{The random timestep Euler method and its continuous dynamics}
\author{Jonas Latz}
\date{\footnotesize Department of Mathematics, University of Manchester, UK\\
\url{jonas.latz@manchester.ac.uk}}
\begin{document}
\maketitle
\begin{abstract}
ODE solvers with randomly sampled timestep sizes appear in the context of chaotic dynamical systems, differential equations with low regularity, and, implicitly, in stochastic optimisation. In this work, we propose and study the stochastic Euler dynamics -- a continuous-time Markov process that is equivalent to a linear spline interpolation of a random timestep (forward) Euler method. We understand the stochastic Euler dynamics as a path-valued ansatz for the ODE solution that shall be approximated.
We first obtain qualitative insights by studying deterministic Euler dynamics which we derive through a first order approximation to the infinitesimal generator of the stochastic Euler dynamics. Then we show convergence of the stochastic Euler dynamics to the ODE solution  by studying the associated infinitesimal generators and by a novel local truncation error analysis. Next we prove stability by an immediate analysis of the random timestep Euler method and by deriving Foster--Lyapunov criteria for the stochastic Euler dynamics; the latter also yield bounds on the speed of convergence to stationarity. The paper ends with a discussion of second-order stochastic Euler dynamics and a series of numerical experiments that appear to verify our analytical results.
\end{abstract}
\noindent\textbf{Keywords:} numerical methods for ordinary differential equations, piecewise deterministic Markov processes, stochastic algorithms

\noindent\textbf{MSC2020:} 	65L05,  	
	60J25,   
	65C99  	
\section{Introduction}
Ordinary and partial differential equations are used to  model systems and processes in, e.g., the physical and social sciences. There, they sometimes arise as limits of certain stochastic processes or particle systems. Examples are the heat equation that describes the behaviour of an underlying Brownian motion, see, e.g., \cite{Pavl}, chemical reaction networks that are often represented by systems of ODEs arising at the large-number-of-reactions-limit of underlying Markov pure jump processes, see, e.g., \cite{Darling,Kurtz}, or non-linear Fokker--Planck equations that describe the distribution of pedestrians that move in space according to certain McKean--Vlasov stochastic processes, see \cite{Gomes}.
When models of this form need to be simulated, both the stochastic processes and their differential equation limit can be taken into account: whilst the basic heat equation in low dimensions is usually accurately simulated with the  finite element method \cite{Iserles}, Yang et al. \cite{nonlocal} propose a sampling-based approach for closely-related, semilinear non-local diffusion equations.

In this work, we propose and  study a class of continuous-time Markov processes (CTMPs) that can be used as an approximation scheme for  a large class of ordinary differential equations. We obtain these so-called \emph{stochastic Euler dynamics}  by employing the forward Euler method  with exponentially distributed random timestep sizes and by then linearly interpolating in between the timesteps. This procedure defines a continuous-time Markov process and can be generalised to certain other random timestep ODE solvers. Convergence occurs when the random timesteps converge to zero, which is equivalent to the large-number-of-reactions-limit in chemical reaction networks. We understand our class of CTMPs as a randomised ansatz to the ODE solution, similar to a Gaussian process ansatz in the context of ODEs (e.g., \cite{Kersting}),  PDEs (e.g., \cite{CHEN2021110668,MENG2023112340}), or general function approximation (e.g., \cite{Lin24,stuart2018posterior}) or the random sampling in randomised linear algebra (e.g., \cite{HMT,Nakatsukasa}). In our ansatz, we assume that the ODE can be approximated by a piecewise linear path where the intervals on which the path is linear are random -- the interval lengths are exponentially distributed. 

Random timestep sizes have been motivated by Abdulle and Garegnani \cite{abdulle}  in the context of chaotic dynamics, where the choice of timestep size may have a large influence on the simulation outcome. The intuitive idea is that repeated simulations with random timesteps and the consideration of the distribution of their outcomes may actually be more indicative of the dynamical system's behaviour than a single unstable trajectory. Outside of chaotic dynamics, random timesteps are used for differential equations with irregular right-hand sides: Eisenmann et al. \cite{Eisenmann}, for instance, study them in the case of time-irregular coefficients, whereas \cite{JENTZEN2009346,STENGLE199025} consider them in the discretisation of Carath\'eodory differential equations. 
Random timestep methods have also been considered within stochastic optimisation methods in machine learning, see, e.g., \cite{jin2022,Latz2021,Mignacco,yoshida2024}. These methods fall into the context of our work, as many machine learning methods are based on the gradient descent method, which is a forward Euler timestepping of a gradient flow \cite{Nocedal}. In machine learning, random timesteps can lead to a regularisation of the underlying learning problem. 

The goal of our work is to derive numerical properties of the random timestep Euler method through, both, the lens of continuous-time Markov processes and the lens of numerical analysis for ODE solvers: deriving novel local truncation error estimates and proving convergence by comparing infinitesimal generators, studying stability by studying mean and second moment of the generated trajectory and through continuous-time Foster--Lyapunov criteria, as well as deriving higher-order methods through polynomial splines and the construction of ODEs through approximations of infinitesimal operators. The continuous-time viewpoint is vital in the context of random timestep ODE solvers: as the evaluation times are random, it is difficult to compare different solver trajectories without interpolation. Our approach contains this interpolation naturally.

We now introduce our problem setting, the random timestep Euler method, and the stochastic Euler dynamics. Then, we list our contributions and outline this work.

\subsection{The random timestep Euler method and the stochastic Euler dynamics} 
Let $X:=\mathbb{R}^d$ for some $d \in \mathbb{N}:= \{1,2,...\}$, let $\langle \cdot, \cdot\rangle$ be the Euclidean inner product, and $\|\cdot\|$ be the associated norm. Let $f: X \rightarrow X$ be a Lipschitz continuous function. Throughout this work, we study autonomous ordinary differential equations of the form
\begin{align}
    u'(t) &= f(u(t)) \quad (t > 0), \label{eq_ODE}\\
    u(0) &= u_0. \notag
\end{align}
 The \emph{forward Euler method} approximates the trajectory $(u(t))_{t \geq 0}$ by a sequence 
\begin{equation} \label{eq_Euler_disc_det}
    \hat{u}_k =  \hat{u}_{k-1} + h_k f(\hat{u}_{k-1}) \quad (k \in \mathbb{N}), \qquad \hat{u}_0 = u_0,
\end{equation}
where $(h_k)_{k \in \mathbb{N}} \in (0,\infty)^{\mathbb{N}}$ is a sequence of stepsizes and $\hat{u}_K$ approximates $u\left(t_K\right)$, with $t_K := \sum_{k=1}^Kh_k$ and  $K \in \mathbb{N}$. Between two timesteps, $u(\cdot)$ can be approximated by linear interpolation: $$u(t) \approx \hat{u}(t) :=  \frac{t_{K}-t}{t_{K} - t_{K-1}}\hat{u}_{K-1} + \frac{t-t_{K-1}}{t_{K} - t_{K-1}}\hat{u}_{K} \qquad (t \in [t_{K-1},t_{K}], K \in \mathbb{N}).$$
The path $(\hat{u}(t))_{t \geq 0}$ is a polygonal chain (a piecewise linear process): it is linear in-between the discretisation points $(t_k)_{k = 0}^\infty$ and continuous at the discretisation points.

In the following, we choose the stepsizes  to be independent and identically distributed (i.i.d.) exponential random variables $H_1, H_2,\ldots \sim \mathrm{Exp}(1/h)$, with  $h = \mathbb{E}[H_1] > 0$ being the \emph{stepsize parameter}. Then, we define the \emph{random timestep (forward) Euler method} through
\begin{equation} \label{eq_Euler_RT}
    \hat{V}_k =  \hat{V}_{k-1} + H_k f(\hat{V}_{k-1}) \quad (k \in \mathbb{N}), \qquad \hat{V}_0 = u_0.
\end{equation}
These and all other random variables throughout this work are defined on an underlying probability space $(\Omega, \mathcal{F}, \mathbb{P})$. Whilst \cite{abdulle} allow for a more general distribution of the $(H_k)_{k=1}^\infty$, we require exponentially distributed stepsizes:
When choosing exponentially distributed stepsizes we can formulate the linearly interpolated version (analogous to $\hat{u}(\cdot)$) of the random timestep Euler method $(\hat{V}_k)_{k=0}^\infty$ as a (homogeneous-in-time) continuous-time Markov process, which we refer to as \emph{stochastic Euler dynamics}. This is easy to see when we formulate the stochastic Euler dynamics as an ODE with jumps after exponential waiting times. We denote the random jump times by $T_1, T_2, \ldots$, where $T_k -T_{k-1} = H_k$ $(k \in \mathbb{N})$ and $T_0 = 0$. Then, we define the stochastic Euler dynamics as $(V(t), \overline{V}(t))_{t \geq 0}$ by
\begin{align} \label{eq_SED}
    {V}'(t) &= f(\overline{V}(t))  &(t \in (T_{k-1}, T_k], k \in \mathbb{N}) \notag \\
    \overline{V}'(t) &= 0  &(t \in (T_{k-1}, T_k], k \in \mathbb{N}) \notag \\
    V(T_{k-1}) &= V(T_{k-1}-) &(k \in \mathbb{N}) \\ \notag
     \overline{V}(T_{k-1}) &=  {V}(T_{k-1}-) &(k \in \mathbb{N}) \\
    V(0) &=  \overline{V}(0) = u_0 \notag
\end{align}
where we denote $g(t-) = \lim_{t_0 \uparrow t}g(t_0)$ for some function $g$ on $\mathbb{R}$. This process is well-defined as it consists of subsequent well-defined ODEs, see, e.g. \cite{Latz2021}. The associated jump chain $(V(T_k))_{k =0}^\infty = (\overline{V}(T_k))_{k =0}^\infty$ is, of course, identical to the trajectory  $(\hat{V}_k)_{k=0}^\infty$ of the random timestep Euler method \eqref{eq_Euler_RT}.
We illustrate the stochastic Euler dynamics  in Figure~\ref{fig:SED_examples} and note that its two components can be interpreted as follows:
\begin{itemize}
    \item  $(\overline{V}(t))_{t \geq 0}$ is a piecewise constant process, that jumps to the value of $(V(t))_{t\geq 0}$ at the jump time and, thus, memorises this value until the next jump time. We refer to $(\overline{V}(t))_{t \geq 0}$ as \emph{companion process}.
    \item $(V(t))_{t \geq 0}$ is the piecewise linear path that corresponds to the linearly interpolated Euler trajectory $(\hat{u}(t))_{t \geq 0}$ where the $\hat{u}_K = \hat{V}_K$ are obtained from the random timestep Euler method, for $K \in \mathbb{N}_0 := \mathbb{N} \cup \{0\}$. It is linear on the jump time intervals $(T_{k-1}, T_k]$  as $t \mapsto f(\overline{V}(t))$ is constant on those intervals. We sometimes refer also to $({V}(t))_{t \geq 0}$ as \emph{stochastic Euler dynamics}.
\end{itemize}

\begin{figure}
    \centering
    \includegraphics[scale=0.85]{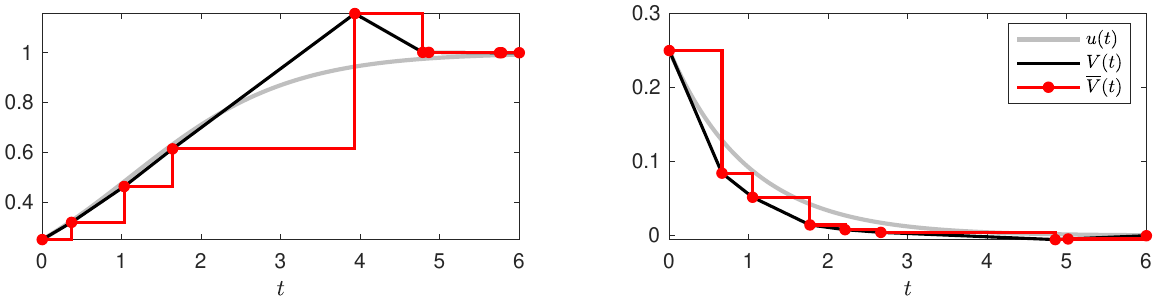}
    \caption{Realisations of the stochastic Euler dynamics considering the ODEs $u'=(1-u)u$ (left) and $u'=-u$ (right) with initial values $u_0 = 0.25$ and stepsize parameter $h= 0.8$.}
    \label{fig:SED_examples}
\end{figure}

The stochastic Euler dynamics $(V(t),  \overline{V}(t))_{t \geq 0}$ is an ODE with random jumps and, thus, falls into the category of piecewise deterministic Markov processes (PDMPs) as introduced by Davis \cite{Davis84}. PDMPs are non-diffusive stochastic processes that have appeared in, e.g., biology \cite{rudnicki2017piecewise}, computational statistics \cite{Zigzag,Boomerang}, and optimisation theory \cite{jin2022}. Especially the Zigzag process \cite{Zigzag} is conceptually similar to the stochastic Euler dynamics, as both processes are polygonal chains. PDMPs are Feller processes and can, thus, be represented by infinitesimal generators. We denote the infinitesimal generator of the stochastic Euler dynamics by $\mathcal{A}_h: C^1(X \times X) \rightarrow C^0(X \times X)$. It is defined by
\begin{align*}
    \mathcal{A}_h \varphi(v,\overline{v}) :&= \lim_{t \downarrow 0} \frac{1}{t}\left(\mathbb{E}[\varphi(V(t), \overline{V}(t))|V(0) = v, \overline{V}(0) = \overline{v}] - \varphi(v, \overline{v})\right) \\
    &= \langle \nabla_v \varphi(v,\overline{v}), f(\overline{v})\rangle + (\varphi (v,v) - \varphi(v,\overline{v}))/h \qquad (v,\overline{v} \in X),
\end{align*}
where $\varphi \in C^1(X\times X)$ is a test function. As usual, $C^k(A; B)$ denotes the Banach space of $k$-times continuously differentiable functions $g:A \rightarrow B$ that is equipped with the usual supremum norm $\| g \|_{\infty} = \sum_{i=0}^k\sup_{x \in A}|g^{(i)}(x)|$ and we write $C^k(A) := C^k(A; \mathbb{R}).$
In a similar way, we can also obtain the    infinitesimal generator of the ODE \eqref{eq_ODE}. There, we have 
$$
\mathcal{A} \psi(u) :=  \langle \nabla_u \psi(u), f(u) \rangle  \qquad (u \in X),
$$
where $\psi \in C^1(X)$. In a way, we see $\mathcal{A}_h$ as an approximation of $\mathcal{A}$. Infinitesimal generators play an important role in our analysis, especially when we study convergence of the paths of the stochastic Euler dynamics to the ODE solution and when we consider Foster--Lyapunov criteria. For other uses of infinitesimal generators, we refer to, e.g.,  \cite{lasota2013chaos,Pavl}.

\subsection{Contributions and outline}
The purpose of this work is an analysis of the random timestep Euler method and the stochastic Euler dynamics. We begin by proposing and analysing the \emph{deterministic Euler dynamics}, a system of ordinary differential equations  arising from an approximation of the infinitesimal operator $\mathcal{A}_h$. This continuous-time dynamical system is closely related to the forward Euler method:
\begin{itemize}
    \item we prove stability of the deterministic Euler dynamics as well as convergence to \eqref{eq_ODE} in the small stepsize parameter limit with rate $O(h; h\downarrow 0)$ in the case of a linear ODE
\end{itemize}
We then study properties of the stochastic Euler dynamics. Firstly, we consider the \emph{convergence} of the stochastic Euler dynamics to the ODE \eqref{eq_ODE} using two approaches:
\begin{itemize}
    \item we prove the weak convergence of the path $(V(t))_{t \geq 0}$ to $(u(t))_{t \geq 0}$ with respect to a weighted uniform metric where $f$ in \eqref{eq_ODE} only needs to be Lipschitz continuous
    \item we introduce a notion of local truncation error for the stochastic Euler dynamics and analyse it in the case of linear ODEs; we especially show that this \emph{local root mean square truncation error} behaves as $\mathbb{E}[\|V(\varepsilon) - u(\varepsilon)\|^2]^{1/2} = O(\varepsilon^2; \varepsilon \downarrow 0)$
\end{itemize}
Secondly, we study the \emph{stability} of the stochastic Euler dynamics in the case of linear ODEs $u' = Au$ with symmetric $A$:
\begin{itemize}
    \item we prove stability of the random timestep Euler method by analysing mean and second moment of the discrete chain
    \item we prove stability of the stochastic Euler dynamics using Foster--Lyapunov criteria and give sharp bounds on its speed of convergence to stationarity
\end{itemize}
Finally, to extend our theoretical work on the random timestep Euler method and the stochastic Euler dynamics,
\begin{itemize}
    \item we introduce the second-order stochastic Euler dynamics and study its stability
    \item we aim to verify the results from our theoretical analysis with a series of numerical experiments and explore  situations not covered by our theoretical results
\end{itemize}

This work is organised as follows: Deterministic Euler dynamics are introduced and analysed in Section~\ref{sec_DED}. Convergence and stability of stochastic Euler dynamics are subject of Sections~\ref{Sec_Converg} and \ref{Sec_Stab}. Second-order methods are discussed in Section~\ref{Sec_higherorder} before numerical experiments are presented in Section~\ref{Sec_Numerical} and the work concludes in Section~\ref{Sec_Conclusions}.

\section{Deterministic Euler dynamics} \label{sec_DED}
We begin our study of the stochastic Euler dynamics by discussing a deterministic dynamical system that we obtain through an approximation of the infinitesimal operator $\mathcal{A}_h$. Indeed, we obtain the operator ${\mathcal{A}
}_h^{\rm d}$ through a first order Taylor approximation of $\mathcal{A}_h$. It is given by
$$
{\mathcal{A}}_h^{\rm d} \varphi(w,\overline{w}) :=\langle \nabla_w \varphi(w,\overline{w}), f(\overline{w})\rangle + \langle \nabla_{\overline{w}} \varphi(w,\overline{w}), w-\overline{w} \rangle/h \qquad (w,\overline{w} \in X)
$$
for an appropriate test function $\varphi \in C^1(X\times X; \mathbb{R})$. Now, ${\mathcal{A}}_h^{\rm d}$ is the infinitesimal generator of the following ODE that we refer to as \emph{deterministic Euler dynamics}
\begin{align*}
   w'(t) &= f(\overline{w}(t)), \\
  \overline{w}'(t) &= \frac{w(t)-\overline{w}(t)}{h}, \\
  w(0) &= \overline{w}(0) = u_0.
\end{align*}
This ODE is well-defined as $f$ is Lipschitz and $(w,\overline{w}) \mapsto (w-\overline{w})/h$ is linear.
A noticeable difference between deterministic and stochastic Euler dynamics is that the companion process $\tgn{\overline{w}(t)}$ is a smooth function, where $\tgn{\overline{V}(t)}$ is a piecewise constant jump process. $\tgn{w(t),\overline{w}(t)}$ has an interesting interpretation: The trajectory $\tgn{w(t)}$ approximates the original ODE solution not by evaluating the function $f$ directly, but by relying on evaluations of the trajectory $\tgn{\overline{w}(t)}$. The companion process appears to start an infinitesimal bit slower than $\tgn{w(t)}$ as $\overline{w}'(0)= 0$; so it always trails behind $\tgn{w(t)}$. This fact appears to be used by  $\tgn{\overline{w}(t)}$ as it is defined through its derivative at time $t>0$ being equal to the finite difference $\frac{1}{h}(w(t) -\overline{w}(t))$, where ${w}(t)$ approximates $u(t)$ and $\overline{w}(t)$ approximates $u(t-h)$. We briefly showcase this behaviour for a linear ODE in one dimension.

\begin{example} \label{Ex_Lin_1D_firstorder}
Let $X = \mathbb{R}$ and $f(u) = -au$ for some $a > 0$. Then, we can find analytical expressions for $\tgn{w(t),\overline{w}(t)}$:
\begin{align*}
    w(t) &= \frac{u_0 \exp\left(-\frac{t (\sqrt{1 - 4 a h} + 1)}{2 h}\right)}{2 \sqrt{1 - 4 a h}} \left((-2 a h + \sqrt{1 - 4 a h} + 1) \exp\left(\frac{t \sqrt{1 - 4 a h}}{h}\right) + 2 a h + \sqrt{1 - 4 a h} - 1\right),\\
     \overline{w}(t) &= \frac{u_0 \exp\left(-\frac{t (\sqrt{1 - 4 a h} + 1)}{2 h}\right)}{2 \sqrt{1 - 4 a h}} \left((\sqrt{1 - 4 a h} + 1) \exp\left(\frac{t \sqrt{1 - 4 a h}}{h}\right) + \sqrt{1 - 4 a h} - 1\right) \qquad (t \geq 0).
\end{align*}
We make a few interesting observations: Firstly, we note that both $\tgn{w(t)}$ and $\tgn{\overline{w}(t)}$ approach the true solution $u(t) = \exp(-at)u_0$ as $h \downarrow 0$. Secondly, the trajectories $\tgn{w(t),\overline{w}(t)}$ change their behaviour rather drastically if $ah> 1/4$, but the dynamics is still well-defined. We illustrate this in Figure~\ref{fig:DED_examples}. There, we see that large $h$ lead to oscillatory behaviour, which also happens when we employ the forward Euler method for stepsizes in $(1/a, 2/a)$, i.e., close to instability. Thirdly, consistent with our reasoning above, we see that $\tgn{\overline{w}(t)}$ trails behind $\tgn{w(t)}$.
\end{example}

\begin{figure}
    \centering
    \includegraphics[scale=0.85]{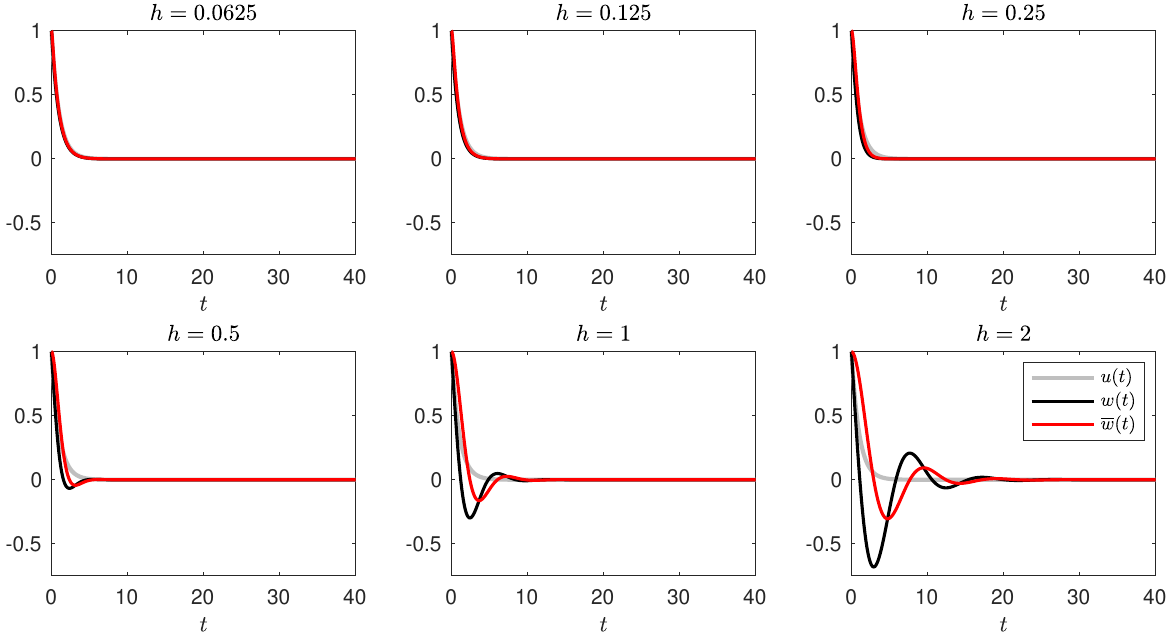}
    \caption{Plots of the deterministic Euler dynamics regarding $u'=-u$, $u_0 = 1$, and $h \in \{0.625, 0.125, \ldots, 1\}$.}
    \label{fig:DED_examples}
\end{figure}

We now try to generalise the discussion from the example. We focus on general linear ODEs, i.e., $f(u) = Au$ for a matrix $A \in \mathbb{R}^{d \times d}$. We anticipate that similar results also hold in the nonlinear case. In any case, we can state the deterministic Euler dynamics as a linear ODE of the form
\begin{equation} \label{eq_B_matrix}
    \begin{pmatrix}w'(t) \\ \overline{w}'(t)
\end{pmatrix} = B \begin{pmatrix}w(t) \\ \overline{w}(t)
\end{pmatrix}, \qquad B := \begin{pmatrix}
    0 & A \\ h^{-1}I_d & -h^{-1}I_d 
\end{pmatrix},
\end{equation}
where $I_d$ denotes the identity matrix in $\mathbb{R}^{d \times d}$.
We start by discussing the stability of this ODE under the assumption that $u' = Au$ is stable, i.e. the  eigenvalues of $A$ have negative real parts.

\begin{theorem} \label{thm:lin_stab_fo}
     Let $A \in \mathbb{R}^{d \times d}$ and let the eigenvalues $\lambda_1,\ldots,\lambda_d \in \mathbb{C}$ of $A$ have negative real parts: $\Re(\lambda_1),\ldots, \Re(\lambda_d) < 0$. Then, the eigenvalues of $B$ have negative real parts, if $h > 0$ is chosen such that
    $$
  h < -\frac{\Re(\lambda_i)}{\Im(\lambda_i)^2} \qquad (i=1,\ldots,d),
    $$
    with $-\Re(\lambda_i)/0 := \infty$.
    Indeed, if $f(u) = Au$ and the assumption above holds, there is a $c >0$ such that  the dynamical system $\tgn{w(t),\overline{w}(t)}$ satisfies $\|(w(t),\overline{w}(t))^T\| \leq \sqrt{2}\exp(-ct)\|u_0\|$ $(t \geq 0)$. 
\end{theorem}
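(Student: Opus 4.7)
The plan is to reduce the spectrum of the block matrix $B$ to that of $A$ by exploiting the block structure in \eqref{eq_B_matrix}, and then to analyse the resulting scalar quadratic eigenvalue problem. Suppose $(x,y)^T \in \mathbb{C}^{2d}$ is an eigenvector of $B$ with eigenvalue $\mu \in \mathbb{C}$. The two block rows read $Ay = \mu x$ and $h^{-1}(x-y) = \mu y$; the second identity yields $x = (1 + h\mu)y$, and substituting into the first gives $Ay = \mu(1+h\mu)y$, so $y$ must be an eigenvector of $A$ with eigenvalue $\lambda := h\mu^2 + \mu$. Conversely, every eigenvalue $\lambda$ of $A$ produces exactly two eigenvalues of $B$, namely the roots
$$\mu_{\pm} \;=\; \frac{-1 \pm \sqrt{1 + 4h\lambda}}{2h}$$
of $h\mu^2 + \mu - \lambda = 0$, where $\sqrt{\cdot}$ denotes the principal complex branch (so $\Re\sqrt{z} \geq 0$).

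Next I would check that, under the stepsize hypothesis, both $\mu_{\pm}$ have strictly negative real part for every eigenvalue $\lambda = \alpha + i\beta$ of $A$ with $\alpha < 0$. For $\mu_-$ this is immediate, since $\Re\sqrt{\cdot} \geq 0$ and $h > 0$. For $\mu_+$, writing $p + iq := \sqrt{1 + 4h\lambda}$ with $p \geq 0$, the condition $\Re(\mu_+) < 0$ is equivalent to $p^2 < 1$. Using the standard identity $p^2 = \tfrac{1}{2}\bigl(|1+4h\lambda| + (1+4h\alpha)\bigr)$, this reduces to
$$\sqrt{(1+4h\alpha)^2 + 16h^2\beta^2} \;<\; 1 - 4h\alpha,$$
and since $\alpha < 0$ ensures the right-hand side is positive, both sides may be squared; expansion and cancellation of $16h^2\alpha^2$ give $16h\alpha + 16h^2\beta^2 < 0$, i.e.\ $h\beta^2 + \alpha < 0$, or $h < -\alpha/\beta^2$. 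This is precisely the stated hypothesis, with the convention $-\alpha/0 := \infty$ handling the real-eigenvalue case $\beta = 0$.

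With all eigenvalues of $B$ in the open left half-plane, standard linear ODE stability theory provides constants $M \geq 1$ and $c > 0$ such that $\|e^{tB}\|_2 \leq M e^{-ct}$; applying this to the initial datum $(w(0),\overline{w}(0))^T = (u_0,u_0)^T$, which has Euclidean norm $\sqrt{2}\|u_0\|$, yields the exponential decay in the stated form after adjusting the rate and absorbing $M$ (or, equivalently, by working in an equivalent inner-product norm in which $B$ is strictly dissipative and then returning to the Euclidean norm). The main technical obstacle is the second paragraph: the block reduction to a scalar quadratic is routine, but pinning down the exact threshold on $h$ under which both complex roots of that quadratic have negative real parts requires the careful principal-square-root computation above; once this sharp spectral condition is in hand, the exponential decay bound is standard.
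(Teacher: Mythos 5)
Your spectral reduction is essentially the paper's argument in a different guise: the paper computes the characteristic polynomial of $B$ via a block-determinant (Schur-complement) identity, arriving at $\det\bigl((\tfrac{\mu}{h}+\mu^2)I_d - \tfrac1h A\bigr)=0$, while you work with an eigenvector $(x,y)^T$ and eliminate $x=(1+h\mu)y$; both routes land on the same scalar quadratic $h\mu^2+\mu-\lambda=0$, and your principal-square-root analysis (which you spell out in more detail than the paper's terse ``by studying the principal square root\dots'' remark) correctly recovers the sharp threshold $h\Im(\lambda)^2<-\Re(\lambda)$. So the core of the proof is sound and in line with the paper.

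The gap is your final paragraph. The claimed bound $\|(w(t),\overline w(t))^T\|\le\sqrt2\,e^{-ct}\|u_0\|$ is an \emph{equality} at $t=0$, so a constant $M>1$ in $\|e^{tB}\|\le M e^{-c't}$ cannot be ``absorbed by adjusting the rate'': for every $c>0$ the function $M e^{-ct}$ exceeds $1$ on a neighbourhood of $t=0$, and the suggested change-of-norm trick reintroduces the same constant when you return to the Euclidean norm. In fact, for the stated Euclidean bound to hold as $t\downarrow 0$ one needs $\tfrac{\mathrm d}{\mathrm dt}\|(w(t),\overline w(t))^T\|^2\big|_{t=0}=2\langle u_0,Au_0\rangle\le 0$, which does not follow from the eigenvalues of $A$ having negative real parts alone (consider a highly non-normal $A$ with a suitable $u_0$). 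To be fair, the paper's own proof only establishes the eigenvalue statement and asserts the exponential decay with the constant $\sqrt2$ without argument, so the point you are glossing over is also left open in the source; but as written, your closing step does not actually close it.
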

\begin{proof}
    The eigenvalues of $B$ are the roots of the characteristic polynomial 
\begin{align*}
   p(\mu) 
   = \det\left(-\mu I_d + \frac{h^{-1}}{h^{-1}+\mu}A\right) \det(-(h^{-1}+\mu)I_d)
   = \det((\tfrac{\mu}{h}+\mu^2)I_d-\tfrac{1}{h}A).
\end{align*}
Now, given the eigenvalues of $A$, we can determine the eigenvalues of $B$ by solving
$
 h\mu^2 + \mu  - \lambda_i = 0,
$
for each $i = 1,\ldots,d$. We obtain
$
\mu = (-1 \pm \sqrt{1 + 4h\lambda_i })/({2h})
$ and by studying the principle square root of $1 + 4h \lambda_i$, we see that $\Re(\mu)<0$, if the $h \Im(\lambda_i)^2 < - \Re(\lambda_i)$.
\end{proof}
This theorem confirms our discussion from Example~\ref{Ex_Lin_1D_firstorder}, however it is rather different to usual stability theory of ODE solvers for stiff linear ODEs. If the eigenvalues of $A$ are real, i.e., $\Im(\lambda_i) = 0$ $(i=1,...,d)$, we have no restriction on $h$. Note that this is especially the case if $A$ is symmetric. Otherwise, large imaginary parts require a small stepsize $h = O(1/\Im(\lambda_i)^2)$. Isolatedly from the real parts, this observation is qualitatively similar to usual stability theory. We illustrate these facts in numerical experiments in Section~\ref{Sec_Numerical}.

We next study the deterministic Euler dynamics at the small stepsize limit $h \downarrow 0$ and especially its convergence to the ODE \eqref{eq_ODE}.

\begin{theorem} \label{thm_det}
   Let $f(u) = Au$, where $A \in \mathbb{R}^{d \times d}$, and $B$ be as given in \eqref{eq_B_matrix}.
  Then, we have for $t \geq 0$, 
  \begin{enumerate}
      \item[(i)] $\|w(t)-\overline{w}(t)\| \leq \sqrt{2}h\|\exp(tB)\|\|u_0\| $,
      \item[(ii)] $\|w'(t)-Aw(t)\|  \leq \sqrt{2}h\|\exp(tB)\|\|A\|\|u_0\|$,
      \item[(iii)] $\|w(t)-u(t)\| \leq C_th $ for some constant $C_t > 0$, and
      \item[(iv)]  $\|\overline{w}(t)-u(t)\| \leq C_t^*h $ for some constant $C_t^* > 0$.
  \end{enumerate}
\end{theorem}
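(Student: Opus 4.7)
The strategy hinges on two elementary observations: the identity $w(t)-\overline{w}(t)=h\overline{w}'(t)$, which is immediate from the $\overline{w}$-equation in the deterministic Euler dynamics, and the commutation $B\exp(tB)=\exp(tB)B$. Writing $z(t):=(w(t),\overline{w}(t))^\top$, so $z(t)=\exp(tB)(u_0,u_0)^\top$, a direct check using the cancellation in the second block gives $B(u_0,u_0)^\top=(Au_0,0)^\top$. Therefore $z'(t)=\exp(tB)\,B(u_0,u_0)^\top=\exp(tB)(Au_0,0)^\top$, and each block of $z'(t)$ — in particular $\overline{w}'(t)$ — is controlled by $\|\exp(tB)\|\,\|Au_0\|$. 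Multiplying by $h$ produces (i) (with $\|A\|\,\|u_0\|$ entering naturally via $\|Au_0\|\le\|A\|\|u_0\|$). Part (ii) is then immediate from $w'(t)-Aw(t)=A\overline{w}(t)-Aw(t)=-A(w(t)-\overline{w}(t))$ and one more application of $\|A\|$.

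For (iii), set $d(t):=w(t)-u(t)$ and $\delta(t):=w'(t)-Aw(t)$. Then $d(0)=0$ and $d'(t)=w'(t)-Au(t)=\delta(t)+Ad(t)$, so Duhamel's formula gives $d(t)=\int_0^t\exp((t-s)A)\delta(s)\,ds$. Inserting the bound from (ii) and integrating yields (iii) with the explicit constant $C_t:=\sqrt{2}\,\|A\|\,\|u_0\|\int_0^t\|\exp((t-s)A)\|\,\|\exp(sB)\|\,ds$, which is finite for each fixed $t\ge 0$ by continuity of the two matrix exponentials. Part (iv) is then the triangle inequality $\|\overline{w}(t)-u(t)\|\le\|\overline{w}(t)-w(t)\|+\|w(t)-u(t)\|$ combined with (i) and (iii); one can take $C_t^*:=\sqrt{2}\|\exp(tB)\|\|u_0\|+C_t$.

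The main subtlety is that $\|\exp(tB)\|$ depends implicitly on $h$ through $B$, so the bounds in (iii)--(iv) only yield a genuine $O(h;h\downarrow 0)$ rate once one verifies that the constants $C_t, C_t^*$ do not degenerate as $h\downarrow 0$. This is the only nontrivial point and follows from the quadratic $h\mu^2+\mu-\lambda_i=0$ exploited in the proof of Theorem~\ref{thm:lin_stab_fo}: its roots give an $h$-uniform description of the spectrum of $B$, and hence $h$-uniform control of $\|\exp(sB)\|$ on any compact time interval. The remainder of the argument is a clean commutation-plus-Duhamel calculation and requires no further ingredients beyond standard matrix-exponential estimates.
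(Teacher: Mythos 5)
Your argument is sound and, for parts (iii)--(iv), takes a genuinely different route than the paper. For (i)--(ii) you use the commutation $\exp(tB)B=B\exp(tB)$ together with the cancellation $B(u_0,u_0)^\top=(Au_0,0)^\top$, which is a cleaner version of the paper's calculation (the paper's intermediate equality $\sqrt{\|w'\|^2+\|\overline{w}'\|^2}=\|\exp(tB)(u_0,u_0)^\top\|$ appears to drop a factor of $B$); note, however, that your calculation yields the constant $\|A\|$ in place of the theorem's $\sqrt{2}$ for (i), and $\|A\|^2$ in place of $\sqrt{2}\|A\|$ for (ii) — you flag the former but then use the paper's (ii) bound verbatim when forming $C_t$, so the constants should be made internally consistent. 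For (iii) the paper differentiates $\tfrac12\|u-w\|^2$ and invokes Gr\"onwall, whereas you linearise via $d'(t)=Ad(t)+\delta(t)$ and apply Duhamel; both are standard, but Duhamel makes the constant $C_t$ fully explicit whereas the paper's Gr\"onwall bound is left with an implicit (and slightly circular) function $g$. Your closing observation that $\|\exp(tB)\|$ depends on $h$ — so that an $O(h;h\downarrow 0)$ rate requires $h$-uniform control of $\|\exp(sB)\|$ on compacts — identifies a real point the paper passes over silently; but the resolution you sketch is not yet an argument: the roots of $h\mu^2+\mu-\lambda_i=0$ split into one branch tending to $\lambda_i$ and one diverging to $-\infty$ as $h\downarrow 0$, so beyond spectral location one must also show that the eigenvector conditioning of $B$ (or a suitable similarity transform) stays bounded as $h\downarrow 0$ — true here (the limiting eigenvectors are $(1,1)^\top$ and $(0,1)^\top$ per block), but this needs to be said.
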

\begin{proof} Let $t \geq 0$.
(i) We have $$\|w(t) - \overline{w}(t)\| = h\|\overline{w}'(t)\| \leq h \sqrt{\|w'(t)\|^2 + \|\overline{w}'(t)\|^2} = h \|\exp(tB)(u_0, u_0)^T\| \leq \sqrt{2}h\|\exp(tB)\|\|u_0\|.$$
    (ii) We have $$\|w'(t)-Aw(t)\| = \|A\overline{w}(t)-Aw(t)\| \leq \|A\|\|\overline{w}(t)-w(t)\|\stackrel{\text{(i)}}{\leq } \sqrt{2}h\|A\|\|\exp(tB)\|\|u_0\|.$$
    (iii) We have
    \begin{align*}
            \frac{\mathrm{d}}{\mathrm{d}t}\frac{1}{2}\|u(t) - w(t)\|^2 &= \langle u(t) - w(t), A(u(t) - \overline{w}(t)) \rangle \\ &= \langle u(t) - w(t), A(u(t) - w(t)) \rangle - h\langle u(t) - w(t), A \overline{w}'(t) \rangle \\ &\leq \alpha \|u(t) - w(t)\|^2 + h\|u(t) - w(t)\|\|\overline{w}'(t)\|,
    \end{align*}
    with $\alpha \in \mathbb{R}$ being the maximum eigenvalue of $\frac{1}{2}(A + A^T)$.
    Then, Gr\"onwall's inequality implies:
    $$\|u(t) - w(t)\|^2 \leq h g(t) + h\int_{0}^t g(s)\exp(\alpha(t-s))\mathrm{d}s,$$
    where $g(t) = \int_0^t \|u(s) - w(s)\|\|\overline{w}'(s)\|\mathrm{d}s$. All integrals that appear are finite as we integrate a continuous function that is bounded on the compact domain of integration $[0,t]$.

    \noindent (iv) The norm can be bounded using the triangle inequality with (i) and (iii).
\end{proof}
In the theorem above, we do not require $u' = Au$ or the associated deterministic Euler dynamics to be stable, i.e., $A$ does not need to to satisfy the conditions from Theorem~\ref{thm:lin_stab_fo}. However, if those conditions are satisfied, the quantities bounded in (i), (ii) converge to $0$, as $t \rightarrow \infty$ even if $h$ is constant.

Throughout this section, we have seen similarities of the deterministic Euler dynamics and the usual forward Euler method: we especially see the same global convergence order $O(h; h \downarrow 0)$ and at least a qualitative similarity in the close-to-instability regime of forward Euler methods. The stability theory of deterministic Euler methods yields rather different results. Whilst deterministic and stochastic Euler dynamics have closely related infinitesimal operators, they behave rather differently: smoothness and discontinuity of companion processes are a stark difference. 
In Section~\ref{Sec_Numerical}, we explore further differences and similarities between deterministic and stochastic Euler dynamics in numerical experiments. Until then, we focus on the stochastic Euler dynamics.

\section{Convergence and error analysis} \label{Sec_Converg}
We now study the convergence of the stochastic Euler dynamics $\tgn{V(t)}$ towards the underlying ODE $\tgn{u(t)}$ in the small stepsize limit $h \downarrow 0$. We illustrate this convergence behaviour in Figure~\ref{fig:convergence_SED}.
\begin{figure}
    \centering
    \includegraphics[scale = 0.77]{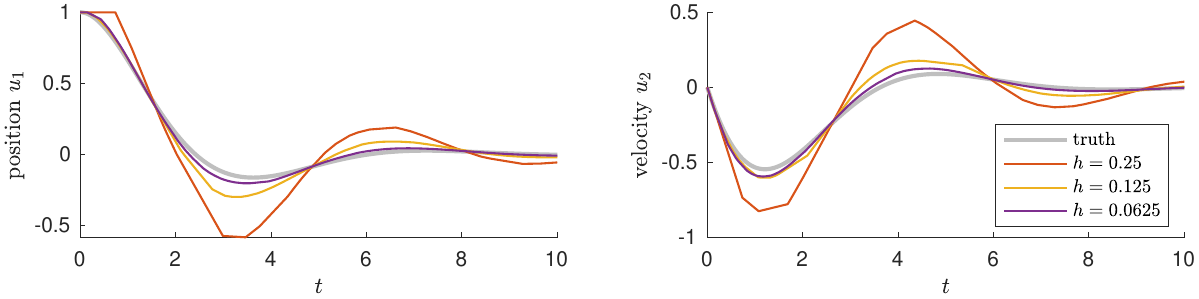}
    \caption{The trajectory of an underdamped harmonic oscillator $u_1' = u_2, u_2' = -u_1-u_2$ with initial value $u(0) = (1,0)^T$ and realisations of the corresponding stochastic Euler dynamics $\tgn{V(t)}$ for $h \in \{0.25, 0.125, 0.0625\}$. We show the ground truth $\tgn{u(t)}$  and one realisation of $\tgn{V(t)}$ for each of the stepsize parameters. The stochastic Euler dynamics approaches the ODE solution as $h$ decreases.}
    \label{fig:convergence_SED}
\end{figure}
Initially, we consider the convergence of the complete paths of the processes on the space $C := C^0([0,\infty); X)$, which we equip with the weighted metric $$d_C(\tgn{x(t)}, \tgn{y(t)}) = \int_0^\infty \exp(-t)\min\left\lbrace 1,\sup_{s\leq t}\|x(s)-y(s)\|\right\rbrace\mathrm{d}t \qquad (\tgn{x(t)}, \tgn{y(t)} \in C).$$ When we speak of weak convergence of a sequence of stochastic processes $(x_n(t))_{t \geq 0, n \in \mathbb{N}}: \Omega \rightarrow C^{\mathbb{N}}$ to a stochastic process $\tgn{x(t)}: \Omega \rightarrow C$, as $n\rightarrow \infty$, we mean that $$\lim_{n \rightarrow \infty}\mathbb{E}[F(\tgn{x_n(t)})] = \mathbb{E}[F(\tgn{x(t)})]$$ for any bounded and continuous $F: C \rightarrow \mathbb{R}$ and write $\tgn{x_n(t)} \Rightarrow \tgn{x(t)}$, as $n \rightarrow \infty$.
In Subsection~\ref{Subs_uni_bound}, we discuss the case of ODEs \eqref{eq_ODE} with bounded $f$, which is a setting that is slightly easier to handle. We then generalise these results to Lipschitz continuous $f$ in Subsection~\ref{Subs_uni_Lip}.

These convergence results do not lead to immediate error estimators. Thus, we follow a different approach in Subsection~\ref{subse_truncationerror}: we discuss and derive the local root mean square truncation error of the stochastic Euler dynamics.

\subsection{Uniform approximation for bounded $f$} \label{Subs_uni_bound}

Before we state the convergence theorem in the case of bounded $f$, we introduce an auxiliary stochastic process that allows us later to use a certain proof technique. Indeed, we define the \emph{jump time process} $(\overline{T}(t))_{t \geq 0}$ that records the jump times of $\tgn{V(t), \overline{V}(t)}$, i.e. $(\overline{T}(t))_{t \geq 0}$ is a Markov pure jump process and has the same  jump times $T_1, T_2,\ldots$ as $\tgn{V(t), \overline{V}(t)}$. At each jump time, it jumps to the value of that jump time $\overline{T}(T_k) := T_k$ $(k \in \mathbb{N}_0)$.

\begin{proposition}\label{prop_unif}
    Let $f:X \rightarrow X$ be Lipschitz continuous and bounded. Moreover, let $\tgn{u(t)}$ and $\tgn{V(t), \overline{V}(t)}$ be the ODE solution \eqref{eq_ODE} and the stochastic Euler dynamics \eqref{eq_SED} with initial value $u_0 \in X$, respectively. In addition, we assume to have access to the jump time process $(\overline{T}(t))_{t \geq 0}$. Then, $(V(t))_{t \geq 0} \Rightarrow (u(t))_{t \geq 0}\  (h \downarrow 0).$
\end{proposition}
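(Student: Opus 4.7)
The strategy is to show the stronger statement that $V$ converges to $u$ in $L^1$ uniformly on compact intervals, which immediately yields weak convergence in $(C,d_C)$. Since $u$ is deterministic, the coupling is automatic: I work on the probability space on which $V$ (and hence the jump times $T_1,T_2,\ldots$) is defined, and compare trajectories pathwise. Throughout, let $L$ denote the Lipschitz constant of $f$ and set $c:=L\|f\|_\infty/2$.

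\emph{Step 1 (reduction to uniform convergence).} Since
$$d_C(V,u) \leq \int_0^\infty e^{-t}\,\min\!\big\{1,\sup_{s\leq t}\|V(s)-u(s)\|\big\}\,\mathrm{d}t,$$
dominated convergence (the integrand is bounded by $e^{-t}$) reduces the claim to showing, for every fixed $T>0$, that $\mathbb{E}[\sup_{s\leq T}\|V(s)-u(s)\|]\to 0$ as $h\downarrow 0$. This gives $\mathbb{E}[d_C(V,u)]\to 0$, hence convergence in probability in $(C,d_C)$, and then weak convergence by composing with bounded continuous test functions and applying dominated convergence again.

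\emph{Step 2 (discrete Gr\"onwall at jump times).} With $e_k:=\|V(T_k)-u(T_k)\|$, writing $V(T_k)-u(T_k)=V(T_{k-1})-u(T_{k-1})+H_k f(V(T_{k-1}))-\int_{T_{k-1}}^{T_k} f(u(s))\,\mathrm{d}s$, adding and subtracting $H_k f(u(T_{k-1}))$, and using that $f$ is $L$-Lipschitz while $u$ is itself $\|f\|_\infty$-Lipschitz, I obtain
$$e_k \leq (1+LH_k)\,e_{k-1} + c\,H_k^2.$$
Iterating from $e_0=0$ and using $\prod_{i=j+1}^{k}(1+LH_i)\leq e^{L(T_k-T_j)}$ yields
$$e_k \leq c\,e^{LT_k}\sum_{j=1}^{k} H_j^2.$$

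\emph{Step 3 (interpolation and moment bound).} For $s\in[T_{k-1},T_k]$, the identical algebra with $s$ replacing $T_k$ produces exactly the same right-hand side, so
$$\sup_{s\leq T}\|V(s)-u(s)\| \leq c\,e^{LT_{N(T)+1}}\sum_{j=1}^{N(T)+1} H_j^2,$$
where $N(T):=\sup\{k:T_k\leq T\}$. Taking expectations and applying Cauchy--Schwarz splits this into two tractable pieces. The memoryless property makes $T_{N(T)+1}-T\sim\mathrm{Exp}(1/h)$, so $\mathbb{E}[e^{2LT_{N(T)+1}}]=e^{2LT}/(1-2Lh)$ is bounded uniformly for $h<1/(2L)$. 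For the compound sum, $N(T)+1$ is a stopping time in the filtration of $(H_j)_{j\in\mathbb{N}}$ and $\mathbb{E}[H_j^2]=2h^2$, $\mathrm{Var}(H_j^2)=20h^4$, $\mathbb{E}[N(T)+1]=T/h+1$, $\mathrm{Var}(N(T))=T/h$, so Wald's identity and its second-moment (Blackwell--Girshick) analogue give $\mathbb{E}[\sum_j H_j^2]=O(h)$ and $\mathrm{Var}(\sum_j H_j^2)=O(h^3)$, hence $\mathbb{E}[(\sum_j H_j^2)^2]^{1/2}=O(h)$. Combining, $\mathbb{E}[\sup_{s\leq T}\|V(s)-u(s)\|]=O(h)\to 0$.

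\emph{Main obstacle.} The delicate point is the coupling between the random number of summands $N(T)+1$ and the prefactor $e^{LT_{N(T)+1}}$ in the expectation; these are dependent, and a naive bound blows up. Cauchy--Schwarz combined with the memoryless property is the cleanest separation. This is precisely what motivates the auxiliary jump time process $(\overline{T}(t))_{t\geq 0}$ mentioned in the statement: it makes explicit the single probability space carrying $V$, $u$, and the randomness of the timesteps, on which the above pathwise bounds are performed.
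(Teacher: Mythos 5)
Your proof is correct, but it takes a genuinely different route from the paper. The paper proves the proposition via Kushner's perturbed-test-function method: it establishes tightness of $(V(t))_{t\ge0}$ in $h$ (using the uniform Lipschitz bound $\|f\|_\infty$ on the piecewise-linear paths and Arzel\`a--Ascoli) and then constructs, for each $\psi\in C_c^1(X)$, an explicit perturbation $\tilde\psi_h$ whose perturbed generator converges in mean to $\mathcal A\psi(V(t))$; the auxiliary process $(\overline T(t))_{t\ge0}$ is needed there to recover $V(t)$ from the history of $\overline V$, not merely to fix a common probability space as you suggest. You instead prove the stronger statement $\mathbb E[\sup_{s\le T}\|V(s)-u(s)\|]=O(h)$ by a discrete Gr\"onwall recursion at the jump times, combined with the memoryless property for the prefactor $e^{LT_{N(T)+1}}$ and Wald/Blackwell--Girshick for the compound sum $\sum_j H_j^2$; Cauchy--Schwarz correctly decouples the two dependent factors, and the restriction $h<1/(2L)$ for the exponential moment is harmless in the limit $h\downarrow0$. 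Your argument is more elementary and buys an explicit first-order rate, which the paper explicitly notes its technique cannot provide; the paper's generator-based approach buys a framework that extends (via truncation) to unbounded Lipschitz $f$ in Theorem~\ref{thm_unif_Lip} and connects the convergence to the operator approximation $\mathcal A_h\to\mathcal A$ that organises the rest of the paper.
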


At first sight, this convergence result appears to be much stronger than results we usually prove about numerical algorithms, where we do not necessarily formulate convergence for the whole positive real line. However, we equip the space $C$ with the metric $d_C$ that puts less weight on later points in time. We can interpret the results as  $(V(t))_{t \in [0,T]} \Rightarrow (u(t))_{t \in [0,T]}$ in $C^0([0,T]; X)$ with the usual uniform metric for any $T > 0$. Our technique does not provide us with a convergence rate.

The proof of Proposition~\ref{prop_unif} uses the perturbed test function methodology proposed by Kushner \cite{kushnerold}. The intuitive idea is that for a process $\tgn{V(t)}$ that is tight with respect to $h>0$, we can show convergence to $(u(t))_{t \geq 0}$ if we see convergence of the associated infinitesimal generators $\mathcal{A}_h$ to $\mathcal{A}$ as $h\downarrow 0$. Convergence of the infinitesimal generator is in the following sense: Let $T > 0$ and $\psi \in C^1_c(X)$ be a continuously differentiable test function with compact support and let $\Tilde{\psi}_h: [0, \infty) \times \Omega \rightarrow \mathbb{R}$ be a  \emph{perturbation} of this test function with
$$
\sup_{t \in [0,T], h \in (0, \varepsilon)}\mathbb{E}[|\Tilde{\psi}_h(t)|] < \infty, \qquad \qquad \lim_{h\rightarrow 0}\mathbb{E}[|\Tilde{\psi}_h(t)- \psi(V(t))|] =0 \quad (t \in [0,T]),
$$
for some $\varepsilon > 0$ small enough.
We then define the perturbed infinitesimal operator applied to $\Tilde{\psi}_h$ by $\tilde{\mathcal{A}}_h\tilde{\psi}_{h}$ through 
\begin{align*}
    \lim_{\tau \downarrow 0} \mathbb{E}\left[ \tilde{\mathcal{A}}_h\tilde{\psi}_{h}(t) -  \frac{\mathbb{E}\left[ \tilde{\psi}_{h}(t+\tau) \mid \overline{V}(s) : s \leq t\right] -  \tilde{\psi}_{h}(t)}{\tau}\right] = 0 \qquad (t \in [0,T]),
\end{align*}
and
\begin{align*}
    \sup_{\tau \in [0,T]} \mathbb{E}\left[ \tilde{\mathcal{A}}_h\tilde{\psi}_{h}(t) -  \frac{\mathbb{E}\left[ \tilde{\psi}_{h}(t+\tau) \mid \overline{V}(s) : s \leq t\right] -  \tilde{\psi}_{h}(t)}{\tau}\right] < \infty \qquad (t \in [0,T]).
\end{align*} Convergence is then implied by
\begin{equation}
    \sup_{t \in [0,T], h \in (0,\varepsilon)}\mathbb{E}[|\tilde{\mathcal{A}}_h\Tilde{\psi}_h(t)|] < \infty, \qquad \qquad \lim_{h\rightarrow 0}\mathbb{E}[|\tilde{\mathcal{A}}_h\Tilde{\psi}_h(t)- \mathcal{A}\psi(V(t))|] =0 \quad (t \in [0,T]), \label{eq_pertu_test}
\end{equation}
for some $\varepsilon > 0$ small enough.
If tightness and the statement \eqref{eq_pertu_test} hold for an appropriate perturbation of all $\psi \in C_c^1(X)$,   Theorem 3.2 in \cite{kushnerold} implies that Proposition~\ref{prop_unif} holds. We formulate these two statements as lemmas and prove them below.

\begin{lemma}\label{Lemma_tight} Let the assumptions of Proposition~\ref{prop_unif} hold. Then, $(V(t))_{t \geq 0}$ is tight with respect to $h > 0$.
\end{lemma}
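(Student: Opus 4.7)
The plan is to exploit the boundedness of $f$ to obtain a purely deterministic Lipschitz bound on the sample paths $(V(t))_{t \geq 0}$, and then invoke an Arzelà--Ascoli type criterion to produce a single compact set in $(C, d_C)$ that contains every realisation, yielding tightness trivially.

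First I would observe that, by construction of the stochastic Euler dynamics \eqref{eq_SED}, the path $(V(t))_{t \geq 0}$ is absolutely continuous with $V'(t) = f(\overline{V}(t))$ almost everywhere on $[0,\infty)$. Since $f$ is bounded, say $M := \sup_{x \in X}\|f(x)\| < \infty$, it follows that $\|V'(t)\| \leq M$ almost everywhere, so every realisation of $(V(t))_{t \geq 0}$ is globally Lipschitz with constant $M$ and satisfies $V(0) = u_0$. Crucially, this bound does not depend on $h$.

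Next I would introduce the set
\[
K := \bigl\{ x \in C : x(0) = u_0 \text{ and } \|x(t) - x(s)\| \leq M |t-s| \text{ for all } s, t \geq 0 \bigr\}
\]
and show that $K$ is compact in $(C, d_C)$. For this I would use that the topology induced by $d_C$ coincides with the topology of uniform convergence on compact subsets of $[0,\infty)$. On each interval $[0,T]$, the restriction of $K$ consists of equicontinuous, uniformly bounded functions (bounded by $\|u_0\| + MT$) starting at a fixed point, so by Arzelà--Ascoli the restriction is relatively compact in $C^0([0,T]; X)$; closedness under uniform convergence is immediate from the Lipschitz condition. A standard diagonal argument then produces, from any sequence in $K$, a subsequence converging uniformly on every compact subset of $[0,\infty)$, hence converging in $(C, d_C)$ to a limit which inherits the Lipschitz constant $M$ and the initial condition, so the limit lies in $K$.

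Finally, since every sample path of $(V(t))_{t \geq 0}$ lies in $K$, we have $\mathbb{P}((V(t))_{t \geq 0} \in K) = 1$ for every $h > 0$, which gives tightness of the family $\{(V(t))_{t \geq 0} : h > 0\}$ on $(C, d_C)$. The main technical point here is the identification of the right compact set and the verification that the $d_C$-topology is compatible with locally uniform convergence; the probabilistic content is essentially trivial because the randomness enters only through the piecewise-constant companion process $\overline{V}$, whose values are irrelevant for the pointwise-in-time bound on $V'$.
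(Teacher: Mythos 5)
Your proposal is correct and follows essentially the same route as the paper: the paper likewise notes that boundedness of $f$ makes every path of $(V(t))_{t \geq 0}$ Lipschitz with the $h$-independent constant $\sup_{v}\|f(v)\|$ and then invokes the Arzel\`a--Ascoli criterion (via Theorem 7.3 of Billingsley) to conclude tightness. Your version merely spells out the compact set $K$ and the diagonal argument that the paper leaves implicit.
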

\begin{proof}
    By definition, $\tgn{V(t)}$ is piecewise linear with Lipschitz constant bounded by $\sup_{v \in X} |f(v)|$. This supremum is finite as $f$ is bounded by definition. Thus, the tightness of $\tgn{V(t)}$ is implied by the Arzel\`a--Ascoli theorem, see, e.g. Theorem 7.3 in \cite{Billingsley}.
\end{proof}

\begin{lemma}\label{Lemma_pert} Let the assumptions of Proposition~\ref{prop_unif} hold. Then, there is a perturbation $\tilde{\psi}_h:[0,\infty) \times \Omega \rightarrow \mathbb{R}$ of every $\psi \in C_c^1(X)$ such that $\psi, \tilde{\psi}_h$ satisfy \eqref{eq_pertu_test}.
\end{lemma}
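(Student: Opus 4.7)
The plan is to exhibit an explicit corrector that converts the discrepancy between $\mathcal{A}_h\psi$ and $\mathcal{A}\psi$ into an $O(h)$ remainder. Naively setting $\tilde{\psi}_h(t) = \psi(V(t))$ and viewing this as a function of the Markov state $(V(t),\overline{V}(t))$, the stochastic Euler generator gives $\mathcal{A}_h\psi(v,\overline{v}) = \langle \nabla\psi(v), f(\overline{v})\rangle$, which differs from $\mathcal{A}\psi(v) = \langle \nabla\psi(v), f(v)\rangle$ precisely by $\langle \nabla\psi(v), f(v) - f(\overline{v})\rangle$. The decisive observation is that jumps of $\overline{V}$ send $\overline{v}\mapsto v$ at rate $1/h$, so adding a correction of order $h$ whose jump produces $\langle\nabla\psi(v), f(v)-f(\overline{v})\rangle$ will cancel the discrepancy exactly.

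Accordingly, I would take
\[
    \Phi_h(v,\overline{v}) := \psi(v) + h\,\langle \nabla\psi(v), f(\overline{v})\rangle, \qquad \tilde{\psi}_h(t) := \Phi_h(V(t),\overline{V}(t)),
\]
restricting without loss of generality to $\psi\in C_c^2(X)$; this restriction is harmless because $C_c^2(X)$ already forms a core for $\mathcal{A}$ and is convergence-determining, which is all that is needed when \eqref{eq_pertu_test} is fed into Kushner's theorem in the proof of Proposition~\ref{prop_unif}. Compactness of $\mathrm{supp}(\psi)$ yields $\|\nabla\psi\|_\infty,\|\nabla^2\psi\|_\infty <\infty$, and together with boundedness of $f$ this gives $|\tilde{\psi}_h(t) - \psi(V(t))| \leq h\|\nabla\psi\|_\infty\|f\|_\infty$ uniformly in $t$, so the two defining properties of a perturbation are immediate.

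It then remains to compute the perturbed generator. Since $\Phi_h$ depends on the process only through the Markov state $(V(t),\overline{V}(t))$ and has a continuous partial derivative in its first slot, the limit definition of $\tilde{\mathcal{A}}_h$ reduces to $\tilde{\mathcal{A}}_h\tilde{\psi}_h(t) = (\mathcal{A}_h\Phi_h)(V(t),\overline{V}(t))$. A direct calculation using $\nabla_v\Phi_h(v,\overline{v}) = \nabla\psi(v) + h\nabla^2\psi(v)f(\overline{v})$ and $\Phi_h(v,v) - \Phi_h(v,\overline{v}) = h\langle\nabla\psi(v), f(v)-f(\overline{v})\rangle$ shows the flow and jump contributions telescope:
\[
    \mathcal{A}_h\Phi_h(v,\overline{v}) = \mathcal{A}\psi(v) + h\,\langle \nabla^2\psi(v)f(\overline{v}),\,f(\overline{v})\rangle.
\]
The remainder is bounded by $h\|\nabla^2\psi\|_\infty\|f\|_\infty^2$ uniformly in $(v,\overline{v})$ and $t$, which delivers both the uniform boundedness of $\mathbb{E}[|\tilde{\mathcal{A}}_h\tilde{\psi}_h(t)|]$ and the vanishing of $\mathbb{E}[|\tilde{\mathcal{A}}_h\tilde{\psi}_h(t) - \mathcal{A}\psi(V(t))|]$ as $h\downarrow 0$, verifying \eqref{eq_pertu_test}.

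The main technical obstacle is the mild regularity mismatch between the stated hypothesis $\psi\in C^1_c$ and the $C^2$-regularity used implicitly when differentiating $\nabla\psi$ in the computation of $\mathcal{A}_h\Phi_h$; the convergence-determining argument above sidesteps this cleanly. The other nontrivial point is that boundedness of $f$ is used crucially to control $\|\nabla^2\psi(v)f(\overline{v})\|$ uniformly in $(v,\overline{v})$: if $f$ were only Lipschitz, $\overline{v}$ could lie arbitrarily far from $\mathrm{supp}(\psi)$ and the remainder would no longer be uniformly small, so a truncation or stopping argument would be required. This is exactly the refinement that Subsection~\ref{Subs_uni_Lip} is set up to provide.
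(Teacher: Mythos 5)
Your argument is correct in its computations, but it takes a genuinely different route from the paper. The paper perturbs $\psi(V(\cdot))$ by the \emph{time integral} $\tilde{\psi}_{1,h}(t) = \int_t^T \exp(-(s-t)/h)\langle \nabla\psi(V(s)), f(\overline{V}(s))-f(V(s))\rangle\,\mathrm{d}s$, so that differentiating in $t$ produces the cancelling term $-\langle\nabla\psi(V(t)), f(\overline{V}(t))-f(V(t))\rangle$ plus a remainder $C_1$ that must be controlled by a somewhat delicate estimate over the jump intervals (rescaling $H_k = h\Delta_k$ and summing the moment generating function of $T_k/h \sim \mathrm{Gamma}(k,1)$). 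You instead use the static first-order corrector $\Phi_h(v,\overline{v}) = \psi(v) + h\langle\nabla\psi(v), f(\overline{v})\rangle$, a function of the Markov state only, for which $\mathcal{A}_h\Phi_h = \mathcal{A}\psi + h\langle\nabla^2\psi(v)f(\overline{v}), f(\overline{v})\rangle$ by a two-line computation; the jump part of the generator does all the cancellation and the remainder is trivially $O(h)$ uniformly. This is the classical averaging/homogenisation corrector and is cleaner: it is adapted (the paper's integral over $[t,T]$ anticipates the future, which is why the perturbed-operator formalism with conditional expectations is invoked there), and it avoids the Gamma-tail estimate entirely. What the paper's construction buys in exchange is one degree of regularity: its perturbation involves only $\nabla\psi$ and therefore works for every $\psi \in C^1_c(X)$ as the lemma literally asserts, whereas your corrector requires $\nabla^2\psi$ and hence $\psi \in C^2_c(X)$. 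Your proposal therefore does not prove the lemma as stated; you prove a weaker statement and argue it suffices downstream. That workaround is defensible — $C^2_c$ (or $C^\infty_c$) is rich enough for Kushner's Theorem 3.2 and the limiting martingale problem is well posed on that class — but you should either say explicitly that you are proving a modified lemma, or patch the gap directly, e.g.\ by mollifying $\psi$ at scale $\delta = \sqrt{h}$ so that $\|\nabla\psi_\delta - \nabla\psi\|_\infty \to 0$ while $h\|\nabla^2\psi_\delta\|_\infty = O(\sqrt{h}) \to 0$. Your remarks on where boundedness of $f$ enters (controlling $\|f(\overline{v})\|$ for $\overline{v}$ far from $\mathrm{supp}\,\psi$) match the role it plays in the paper's bound on $M_1$.
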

\begin{proof}
    Let $T > 0$ and $\psi \in C^1_c(X)$ be a suitable test function with compact support and globally bounded gradient. We now define the perturbed test function $\tilde{\psi}_{h}:=\psi(V(\cdot)) + \tilde{\psi}_{1,h}$, where  $\tilde{\psi}_{1,h}: [0,T]\rightarrow \mathbb{R}$ is given by
$$
\tilde{\psi}_{1,h}(t) = \int_t^T \exp\left(-\frac{s-t}{h}\right)\left\langle \nabla \psi(V(s)), f(\overline{V}(s))-f(V(s)) \right\rangle \mathrm{d}s.
$$
The choice of this perturbation was motivated by an idea mentioned  in Section 4.5 of \cite{kushnerold}. We first show that the perturbation disappears in mean as $h\downarrow 0$. We define the supremum $M_1:=\sup_{(v,\overline{v}) \in X^2}|\left\langle \nabla \psi(v), f(\overline{v})-f(v) \right\rangle|$, which is well-defined as $\psi$ has compact support and as $f$ is bounded. Then,
$$
|\tilde{\psi}_{1,h}(t)| \leq M_1 \int_0^T\exp(-s/h) \mathrm{d}s = h M_1(1-\exp(-T/h)).
$$
Thus,  $\mathbb{E}[|\tilde{\psi}_{1,h}(t)|]\rightarrow 0$ as $h \downarrow 0$. The same bound also implies that $\sup_{t \in [0,T], h \in (0, \varepsilon)}\mathbb{E}[|\Tilde{\psi}_h(t)|] < \infty$. Now we study the action of $\tilde{\mathcal{A}}_h$ on the perturbation.
There, we obtain
\begin{align*}
    \tilde{\mathcal{A}}_h\tilde{\psi}_{1,h}(t) = \lim_{\tau \downarrow 0} \frac{\mathbb{E}\left[ \tilde{\psi}_{1,h}(t+\tau) \mid (\overline{V}(s), \overline{T}(s)) : s \leq t\right] -  \tilde{\psi}_{1,h}(t)}{\tau} = \tilde{\psi}_{1,h}'(t) \qquad (t \geq 0),
\end{align*}
where the last equality follows as access to $\{(\overline{V}(s), \overline{T}(s)): s \leq t\}$ allows us to recover $V(t)$. Indeed, we have $V(t) = \overline{V}(t)+ (t-\overline{T}(t))f(\overline{V}(t))$.
We have 
\begin{align*}
   \tilde{\psi}_{1,h}'(t) &=  \underbrace{\int_t^T \frac{1}{h}  \exp\left(-\frac{s-t}{h}\right)\left\langle \nabla \psi(V(s)), f(\overline{V}(s))-f(V(s)) \right\rangle \mathrm{d}s}_{=:C_1} - \underbrace{\left\langle \nabla \psi(V(t)), f(\overline{V}(t))-f(V(t)) \right\rangle}_{=:C_2}.
\end{align*}
If now $\mathbb{E}[C_1] \rightarrow 0$ as $h \downarrow 0$, the term $C_2$ will allow us to replace $f(\overline{V}(t))$ by $f(V(t))$ in  $\tilde{\mathcal{A}}_h$ and, thus, show convergence to $\mathcal{A}$.
Defining $M_2:= \sup_{v \in X}\|\nabla \psi(v)\|$ and $L$ to be the Lipschitz constant of $f$, we have
\begin{align*}
    |C_1| &\leq  \int_t^T \frac{1}{h}  \exp\left(-\frac{s-t}{h}\right)\| \nabla \psi(V(s))\| \|f(\overline{V}(s))-f(V(s))\|  \mathrm{d}s \\
    &\leq M_2\sum_{k=0}^\infty \int_{T_k}^{T_{k+1}} \frac{1}{h}  \exp\left(-\frac{s}{h}\right)\|f(\overline{V}(s))-f(V(s))\|  \mathrm{d}s \\
    &= LM_2\sum_{k=0}^\infty \mathbf{1}[T_{k}\leq T] \int_{0}^{T_{k+1}-T_k} \frac{s}{h}  \exp\left(-\frac{s+T_k}{h}\right)  \mathrm{d}s \\ &= LM_2\sum_{k=0}^\infty \mathbf{1}[T_{k}\leq T] \exp\left(-\frac{T_k}{h}\right)\left( h - \exp\left(-\frac{T_{k+1}-T_k}{h}\right)(h+T_{k+1}-T_k)\right)
\end{align*}
Studying the limit $h \downarrow 0$ is non-trivial as the distribution of $T_{k+1}-T_k \sim \mathrm{Exp}(h^{-1})$ depends on $h$ itself. Thus, we write $h\Delta_k := T_{k+1}-T_k$, with $\Delta_k \sim \mathrm{Exp}(1)$.  Now, we have
\begin{align*}
     |C_1| &\leq LM_2\sum_{k=0}^\infty \exp\left(-\frac{T_k}{h}\right) \left( h - \exp\left(-\frac{h\Delta_k}{h}\right)h(1+\Delta_k)\right) \leq hLM_2\sum_{k=0}^\infty  \exp\left(-\frac{T_k}{h}\right) 
\end{align*}
We note that $T_k/h \sim \mathrm{Gamma}(k,1)$. By evaluating the moment-generating function of the Gamma distribution, we obtain
\begin{align*}
    \mathbb{E}[|C_1|] \leq hLM_2\sum_{k=0}^\infty 2^{-k},
\end{align*}
implying both
$\sup_{t \in [0,T], h \in (0, \varepsilon)}\mathbb{E}[|\tilde{\mathcal{A}}_h\Tilde{\psi}_h(t)- \mathcal{A}\psi(V(t))|] <\infty$
and 
\begin{align*}
 \lim_{h \downarrow 0} &\mathbb{E}[|\tilde{\mathcal{A}}_h \psi_h(t) - \mathcal{A}\psi(V(t))|] \\ &= \left\langle \nabla \psi(V(t)), f(\overline{V}(t)) \right\rangle -\left\langle \nabla \psi(V(t)), f(\overline{V}(t))-f(V(t)) \right\rangle + \left\langle \nabla \psi(V(t)), f(V(t)) \right\rangle = 0,
\end{align*}
proving the statement of the Lemma.
\end{proof}

The boundedness assumption in Proposition~\ref{prop_unif} is convenient, but probably too strong to be useful in practical applications. In the next subsection, we see how to generalise these results to unbounded, but Lipschitz continuous $f$.
\subsection{Uniform approximation for Lipschitz continuous $f$} \label{Subs_uni_Lip}
The theorem differs from Proposition~\ref{prop_unif} only in that it removes the boundedness assumption on $f$. 
\begin{theorem}\label{thm_unif_Lip}
    Let $f:X \rightarrow X$ be Lipschitz continuous and let $\tgn{u(t)}$ and $\tgn{V(t), \overline{V}(t)}$ be the associated ODE solution \eqref{eq_ODE} and stochastic Euler dynamics \eqref{eq_SED} with initial values $u_0 \in X$, respectively. Moreover, we assume that we have access to the jump time process $(\overline{T}(t))_{t \geq 0}$. Then, $(V(t))_{t \geq 0} \Rightarrow (u(t))_{t \geq 0}\  (h \downarrow 0).$
\end{theorem}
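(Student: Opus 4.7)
My approach is a standard localisation argument that reduces the unbounded-Lipschitz setting to Proposition~\ref{prop_unif}. The only obstacles to reusing the proof of Proposition~\ref{prop_unif} verbatim are that the tightness Lemma~\ref{Lemma_tight} relies on the uniform bound $\sup_{v\in X}\|f(v)\|$, and that in the perturbed-test-function computation the constant $M_1$ would no longer be finite. I will therefore approximate $f$ by a bounded Lipschitz truncation $f_R$, apply Proposition~\ref{prop_unif} to the truncated dynamics, and then show that the error introduced by the truncation is uniformly small in $h$ by controlling the probability that $\tgn{V(t)}$ ever leaves the ball of radius $R$ on a compact time interval.

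\textbf{Truncation and application of Proposition~\ref{prop_unif}.} For each $R>0$, I pick a globally bounded and Lipschitz $f_R:X\to X$ that coincides with $f$ on $\{v:\|v\|\leq R\}$; concretely, $f_R(v):=f(v\min\{1,R/\|v\|\})$ works. Using the \emph{same} driving randomness (initial value $u_0$ and jump-time process $\tgn{\overline T(t)}$) as for $\tgn{V(t),\overline V(t)}$, I define the truncated stochastic Euler dynamics $\tgn{V_R(t),\overline V_R(t)}$ and the truncated ODE solution $\tgn{u_R(t)}$. Because $f_R$ is bounded and Lipschitz, Proposition~\ref{prop_unif} gives $\tgn{V_R(t)}\Rightarrow\tgn{u_R(t)}$ in $(C,d_C)$ as $h\downarrow 0$, for every fixed $R$. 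Moreover, for $R$ larger than the (finite) quantity $\sup_{t\in[0,T]}\|u(t)\|$, pathwise uniqueness of the ODE yields $u_R\equiv u$ on $[0,T]$.

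\textbf{Uniform-in-$h$ non-explosion (the main obstacle).} I must prove that for every $T,\varepsilon>0$ there exist $R$ and $h_0>0$ with
\[
\sup_{h\in(0,h_0)}\mathbb{P}\bigl(\sup_{t\in[0,T]}\|V(t)\|>R\bigr)<\varepsilon.
\]
Since $\tgn{V(t)}$ is piecewise linear between jumps, convexity of $\|\cdot\|$ gives $\sup_{t\in[0,T]}\|V(t)\|\leq\max_{k\leq N_T+1}\|\hat V_k\|$, where $N_T$ is the number of jumps in $[0,T]$. Using the linear growth $\|f(v)\|\leq c_0+L\|v\|$, a direct expansion of $\|\hat V_k\|^2=\|\hat V_{k-1}+H_k f(\hat V_{k-1})\|^2$ together with $\mathbb{E} H_k=h$ and $\mathbb{E} H_k^2=2h^2$ yields a discrete Gr\"onwall recursion $\mathbb{E}\|\hat V_k\|^2\leq(1+Ch)\mathbb{E}\|\hat V_{k-1}\|^2+C'h$, hence $\sup_{k:kh\leq T}\mathbb{E}\|\hat V_k\|^2\leq C''\exp(CT)$ uniformly for $h$ small. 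Applying Dynkin's formula to $(v,\overline v)\mapsto\|v\|^2$ identifies the martingale part of $\|V(t)\|^2$ (the companion jumps contribute zero since $V$ is continuous), whose maximal function is controlled by Doob's inequality, delivering a uniform bound on $\mathbb{E}\sup_{t\leq T}\|V(t)\|^2$ and, via Markov, the required tail estimate.

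\textbf{Assembly.} Given a bounded continuous $F:(C,d_C)\to\mathbb{R}$, the exponential weight in $d_C$ means that the dependence of $F$ on any path $\tgn{x(t)}$ can be restricted to $[0,T]$ at the cost of an error $O(\|F\|_\infty e^{-T})$. On the event $E_R:=\{\sup_{t\leq T}\|V(t)\|\leq R\}$, pathwise $V\equiv V_R$ on $[0,T]$ (same initial condition, same jump times, and $f=f_R$ along the trajectory). Combining this with the non-explosion bound yields
\[
|\mathbb{E} F(\tgn{V(t)})-\mathbb{E} F(\tgn{V_R(t)})|\leq 2\|F\|_\infty\mathbb{P}(E_R^c)+O(\|F\|_\infty e^{-T}),
\]
and, since $u_R=u$ on $[0,T]$ for $R$ large, analogously $|\mathbb{E} F(\tgn{u(t)})-\mathbb{E} F(\tgn{u_R(t)})|=O(\|F\|_\infty e^{-T})$. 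The triangle inequality combined with the convergence $\tgn{V_R(t)}\Rightarrow\tgn{u_R(t)}$ from step~2, and taking first $h\downarrow 0$, then $R\to\infty$, then $T\to\infty$, delivers $\tgn{V(t)}\Rightarrow\tgn{u(t)}$.
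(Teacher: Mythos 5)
Your proof follows the paper's truncation strategy closely: approximate $f$ by a bounded Lipschitz $f_R$, apply Proposition~\ref{prop_unif} to the truncated dynamics, and pass $R\to\infty$. The paper handles the $R\to\infty$ step by citing the Corollary following Theorem~3.2 of Kushner and pp.~160--161 of Kushner--Dupuis, whereas you attempt a direct, self-contained uniform-in-$h$ non-explosion estimate. The high-level structure is therefore the same; the difference lies only in how the truncation error ($R\to\infty$) is controlled.

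The direct non-explosion argument has a gap. Applying $\mathcal{A}_h$ to $\varphi(v,\overline v)=\|v\|^2$ gives $\mathcal{A}_h\varphi(v,\overline v)=2\langle v,f(\overline v)\rangle$: as you yourself note in parentheses, the jump term $(\varphi(v,v)-\varphi(v,\overline v))/h$ vanishes because $V$ is continuous across jump times. The local martingale supplied by Dynkin's formula is therefore \emph{identically zero}, and Doob's maximal inequality applied to it yields no information about $\sup_{t\le T}\|V(t)\|$; the claimed bound on $\mathbb{E}\sup_{t\le T}\|V(t)\|^2$ does not follow from the stated route. The correct replacement is simpler than what you set out to do: with $\|f(v)\|\le c_0+L\|v\|$ one has $\|\hat V_k\|+c_0/L\le(\|\hat V_{k-1}\|+c_0/L)(1+H_kL)$, hence $\|\hat V_k\|+c_0/L\le(\|u_0\|+c_0/L)\exp(LT_k)$; interpolating the final linear segment in the same way gives the \emph{pathwise, deterministic} bound $\sup_{t\le T}\|V(t)\|\le(\|u_0\|+c_0/L)\exp(LT)$, uniform in $h$ and in the realisation of the jump times. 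Thus for $R>(\|u_0\|+c_0/L)\exp(LT)$ one has $\mathbb{P}(E_R^c)=0$, and both the discrete second-moment recursion and the Dynkin--Doob step are unnecessary. With this fix the assembly goes through, though you should replace the $O(\|F\|_\infty e^{-T})$ truncation error by $\omega_F(e^{-T})$ for a modulus of continuity $\omega_F$ of $F$ (bounded continuous $F$ need not be Lipschitz on $(C,d_C)$); equivalently, restrict to uniformly continuous $F$, which suffices for weak convergence.
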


We usually show tightness of a family of processes by proving uniform equicontiuity of that family. An unbounded function $f$ makes it difficult to control the piecewise derivative of $\tgn{V(t)}$, which in turn makes it difficult to show equicontiuity. If tightness is difficult to show due to unboundness (as opposed to explosiveness) of a process, we can study a truncated process instead.  Let $R > 0$ and $f_R: X \rightarrow X$ be chosen such that $$f_R(v) = \begin{cases}
    f(v), &\text{if } \|v\| \leq R, \\
     0, &\text{if } \|v\| \geq R+1,
\end{cases} \qquad (v \in X / \{v: R < \|v\|< R+1\})$$
and $f_R$ being smooth on $\{v: R\leq \|v\|\leq R+1\}$. We define the truncated processes $\tgn{u_R(t)}$ and $\tgn{V_R(t), \overline{V}_R(t)}$ through \eqref{eq_ODE} and \eqref{eq_SED}, respectively, with $f_R$ replacing $f$. We use the idea
\begin{align*}
    \begin{matrix}
        \tgn{V_R(t)} & \xRightarrow{{\color{gray}(h \downarrow 0)}} & \tgn{u_R(t)} \vspace{0.2cm} \\ 
        {{\color{gray}\scriptstyle (R \rightarrow \infty)}} \Big\Downarrow & & \Big\Downarrow{{\color{gray}\scriptstyle (R \rightarrow \infty)}} \vspace{0.1cm} \\
        \tgn{V(t)} & \xRightarrow[{\color{gray}(h \downarrow 0)}]{\ } & \tgn{u(t)}
    \end{matrix}.
\end{align*}
to prove the convergence  as stated in Theorem~\ref{thm_unif_Lip}. Lemma~\ref{Lemma_tight} and \ref{Lemma_pert} are true for $\tgn{u_R(t)}$ and $\tgn{V_R(t), \overline{V}_R(t)}$ for all $R > 0$, and thus, imply $\tgn{V_R(t)} \Rightarrow \tgn{u_R(t)}$ as $h \downarrow 0$. The vertical convergence statements $(R \rightarrow \infty)$ follow from, e.g., the Corollary following Theorem~3.2 in \cite{kushnerold} or the discussion on pp.\ 160-161 in \cite{kushnernew}. These results then imply Theorem~\ref{thm_unif_Lip}. 
\subsection{Local root mean square truncation errors} \label{subse_truncationerror}
The convergence analysis in Proposition~\ref{prop_unif} and Theorem~\ref{thm_unif_Lip}  does not immediately lead to a mechanisms allowing us to estimate the error between ODE solution \eqref{eq_ODE} and stochastic Euler dynamics \eqref{eq_SED}. To get a sense of the order of convergence of the stochastic Euler dynamics, we now discuss the local truncation error that is achieved by the method.

Let $\varepsilon>0$ be small. The local truncation error of an ODE integrator is usually given as the difference between $u(\varepsilon)$ and a single timestep of the integrator with stepsize $\varepsilon$ in the limit as $\varepsilon \downarrow 0$. In the context of stochastic Euler dynamics, this approach is difficult to apply. Let $T_1$ be still the waiting time until the first jump in $(V(t), \overline{V}(t))_{t \geq 0}$. Then, 
$V(\varepsilon) = u_0 + \varepsilon f(u_0)$  with probability $\mathbb{P}(T_1 > \varepsilon) = \exp(-\varepsilon/h)$. Thus, assuming that $f$ is  Lipschitz continuous, we obtain an estimator of the form $\|u(\varepsilon) - V(\varepsilon)\| = O(\varepsilon^2; \varepsilon \downarrow 0)$ with a certain probability. Now, as $\varepsilon$ approaches zero, the probability of the estimator being correct converges to $1$. Traditionally, however, we would let time point $\varepsilon$ and discretisation stepsize $h$ approach zero simultaneously; usually choosing $h = \varepsilon$. Then, however, $\mathbb{P}(T_1 > \varepsilon | h = \varepsilon) = 1/\mathrm{e} \not\approx 1$. 

In the following, we propose a different way to analyse the local truncation error of the stochastic Euler dynamics. Indeed, we compute the local root mean square truncation error $\mathbb{E}[\|V(\varepsilon) - u(\varepsilon)\|^2]^{1/2}$ in the case $f$ being linear and obtain a deterministic statement about the convergence rate -- this approach is different from the mean square error analysis given in \cite{abdulle}, where the error is considered for the discrete-time trajectory.
In our following derivation and analysis we choose a linear ODE \eqref{eq_ODE}, i.e., we let $f(u) = Au$ for some $A \in \mathbb{R}^{d \times d}$. Moreover, we need be able to keep track of how often the stochastic Euler dynamics jumps within a certain time interval. To this end, we define $(K(t))_{t \geq 0}$ to be the (homogeneous) Poisson (point) process with rate $1/h$ on $[0, \infty)$ that is coupled to $(V(t), \overline{V}(t))_{t \geq 0}$ by having identical jump times. In particular, $(K(t))_{t \geq 0}$ is the piecewise constant jump process on $\mathbb{N}_0$ that commences at $K(0) = 0$ and is incremented by one at every jump time $T_1, T_2,\ldots$ of $(V(t), \overline{V}(t))_{t \geq 0}$. Indeed, if $K(\varepsilon) = k \in \mathbb{N}_0$, we have $T_k \leq \varepsilon$ and $T_{k+1} > \varepsilon.$ Thus, if we know $K(\varepsilon)$, we know exactly how many jumps happened up to time $\varepsilon > 0$. Conditionally on $K(\varepsilon)$ the distribution of $V(\varepsilon)$ has a specific form:
$$\mathbb{P}(V(\varepsilon) \in \cdot | K(\varepsilon) = k) = \mathbb{P}((I_d + (\varepsilon - T_k) A)(I_d + H_k A)\cdots(I_d + H_1A)u_0 \in \cdot | K(\varepsilon) = k).$$

Next, we aim to  compute the local root mean square truncation error conditioned on $K(\varepsilon) = k$. The (unconditioned) local root mean square truncation error can then be obtained through the law of total expectation. We first determine the distribution of the random timestep sizes $H_1,\ldots, H_k$ given that $K(\varepsilon) = k$. This is a generalisation of a well-known result about exponential distributions.

\begin{lemma} \label{lemma_uniform}
Let $T_1,T_2,\ldots$ be the jump times of the Poisson point process $(K(t))_{t \geq 0}$, $T_0 = 0$, and $H_k = T_k-T_{k-1} \sim \mathrm{Exp}(h^{-1})$, $k \in \mathbb{N}$, as before. Then,
$$
\mathbb{P}((H_1,\ldots,H_k) \in \cdot | K(t) = k) = \mathrm{Unif}(S_{k,t}) \qquad (k \in \mathbb{N}, t \geq 0),$$
where $S_{k,t} := \{(h_1,\ldots,h_k)\in [0,\infty)^k: h_1+\cdots+h_k \leq t\}$ 
is a $k$-dimensional simplex.
\end{lemma}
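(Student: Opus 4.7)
The plan is a direct Bayes'-rule computation of the conditional density of $(H_1,\ldots,H_k)$ given $\{K(t)=k\}$. (An equivalent route would go through the well-known fact that the ordered jump times of a Poisson process of rate $h^{-1}$ on $[0,t]$, conditional on having exactly $k$ arrivals, are distributed as the order statistics of $k$ i.i.d.\ uniforms on $[0,t]$, and then change variables to the gaps $H_i = T_i - T_{i-1}$; the Jacobian is $1$, and the density $k!/t^k$ on the ordered simplex pushes forward to $k!/t^k$ on $S_{k,t}$. I would use the Bayes approach as it is self-contained.)

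I would begin by assembling three ingredients. First, since the $H_i$ are i.i.d.\ $\mathrm{Exp}(h^{-1})$, their joint density on $[0,\infty)^k$ is $h^{-k}\exp(-(h_1+\cdots+h_k)/h)$. Second, the event $\{K(t)=k\}$ equals $\{T_k \leq t < T_{k+1}\}$, so given $(H_1,\ldots,H_k)=(h_1,\ldots,h_k)$ it occurs exactly when $(h_1,\ldots,h_k)\in S_{k,t}$ and $H_{k+1} > t-(h_1+\cdots+h_k)$; by independence of $H_{k+1}$ from $H_1,\ldots,H_k$, the conditional probability of this event is $\exp(-(t-(h_1+\cdots+h_k))/h)\,\mathbf{1}_{S_{k,t}}(h_1,\ldots,h_k)$. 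Third, $\mathbb{P}(K(t)=k) = e^{-t/h}(t/h)^k/k!$ by the Poisson marginal.

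Combining these through Bayes' rule, the conditional density of $(H_1,\ldots,H_k)$ given $K(t)=k$ is
$$
\frac{h^{-k}\exp\!\left(-\tfrac{h_1+\cdots+h_k}{h}\right) \cdot \exp\!\left(-\tfrac{t-(h_1+\cdots+h_k)}{h}\right)}{e^{-t/h}(t/h)^k/k!}\,\mathbf{1}_{S_{k,t}}(h_1,\ldots,h_k) \;=\; \frac{k!}{t^k}\,\mathbf{1}_{S_{k,t}}(h_1,\ldots,h_k).
$$
The key algebraic point is the cancellation of the two factors involving $h_1+\cdots+h_k$: the exponential decay of the prior density is exactly compensated by the survival probability of the $(k+1)$-st waiting time, leaving a density that is constant on $S_{k,t}$. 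Since $\mathrm{vol}(S_{k,t}) = t^k/k!$, this constant density is precisely $1/\mathrm{vol}(S_{k,t})$, which is the uniform distribution on $S_{k,t}$.

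There is no real obstacle: the main things to be careful about are (i) restricting to the simplex $S_{k,t}$, since the conditional probability of $\{K(t)=k\}$ vanishes when $h_1+\cdots+h_k > t$, and (ii) the degenerate boundary case $k=0$, where the statement is vacuous (the conditional law of an empty tuple). The proof is therefore short and essentially just two lines of density calculation once the three ingredients above are in hand.
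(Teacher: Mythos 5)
Your proposal is correct and follows essentially the same route as the paper: a Bayes'-rule computation combining the joint exponential density of $(H_1,\ldots,H_k)$, the conditional probability $\exp(-(t-(h_1+\cdots+h_k))/h)\,\mathbf{1}_{S_{k,t}}$ of $\{K(t)=k\}$ given the gaps, and the Poisson marginal, with the same cancellation yielding the constant density $k!/t^k$ on $S_{k,t}$.
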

\begin{proof}
    We use Bayes' formula to find the probability density function of $\mathbb{P}((H_1,\ldots,H_k) \in \cdot | K(t) = k)$ and, thus, prove the identity above. Bayes' formula states that 
    $$
   p_{H_1,\ldots,H_k| K(t) = k}(h_1,\ldots,h_k)  = \frac{p_{K(t)| H_1=h_1,\ldots,H_k = h_k}(k) p_{H_1,\ldots,H_k}(h_1,\ldots,h_k)}{p_{K(t)}(k)} \qquad (k \in \mathbb{N}, h_1,\ldots,h_k\geq 0),
    $$
    where the $p$'s describe probability density functions with respect to Lebesgue or counting measures, as appropriate. Let again $k \in \mathbb{N}$, and $h_1,\ldots,h_k\geq 0.$
    We start by noting that
\begin{align*}
    p_{H_1,\ldots,H_k}(h_1,\ldots,h_k) = \frac{1}{h^{k}}\exp\left(-\frac{h_1+\cdots + h_K}{h}\right),  \qquad 
    p_{K(t)}(k) = \left(\frac{t}{h}\right)^k\frac{\exp(-t/h)}{k!},
\end{align*}
by definition and by the property $K(t) \sim \mathrm{Poisson}(t/h)$ $(t \geq 0)$ of Poisson processes, respectively. The conditional density $p_{K(t)| H_1=h_1,\ldots,H_k = h_k}$ describes the event that $H_{k+1}>t-h_1-\cdots - h_k$ whilst requiring that $h_1+\cdots + h_k \leq t$. Thus, we have 
$$
p_{K(t)| H_1=h_1,\ldots,H_k = h_k}(k) = \exp\left(\frac{-t+h_1+\cdots+h_k}{h}\right)\mathbf{1}[h_1+\cdots + h_k \leq t]
$$
and finally
\begin{align*}
       p_{H_1,\ldots,H_k| K(t) = k}(h_1,\ldots,h_k)  &= \frac{\exp\left(\frac{-t+h_1+\cdots+h_k}{h}\right)\mathbf{1}[h_1+\cdots + h_k \leq t]\frac{1}{h^{k}}\exp\left(-\frac{h_1+\cdots + h_K}{h}\right)}{\left(\frac{t}{h}\right)^k\frac{\exp(-t/h)}{k!}} \\
       &= \frac{k!}{t^k} \mathbf{1}[h_1+\cdots + h_k \leq t],
\end{align*}
which is the correctly normalised probability density function of $\mathrm{Unif}(S_{k,t})$, see, e.g., \cite{Simplex}.
\end{proof}
There is an interesting connection between this result and the random Euler methods discussed in \cite{Eisenmann, JENTZEN2009346}. There, the random Euler method is initialised with a set of deterministic discretisation points $t_0, t_1,...$. Then $J \in \mathbb{N}$ random timepoints are sampled uniformly and i.i.d. from  $[t_{k}, t_{k+1}]$ $(k \in \mathbb{N}_0)$ and used for a Monte Carlo approximation of the integral formulation of the differential equation. In Lemma~\ref{lemma_uniform}, we show that we implicitly sample uniformly when we are on a fixed time interval $[t_{k}, t_{k+1}]$ with a fixed number of samples $K(t_{k+1}- t_{k}) = J$, but on the simplex $S_{J,(t_{k+1}- t_{k})}$ instead of the hypercube $[0,1]^J$ (as in the i.i.d. case).

We can now estimate the local root mean square truncation error of $(V(t))_{t \geq 0}$ with respect to the ODE solution $(u(t))_{t \geq 0}$.

\begin{theorem} \label{thm:RMSTE}
    Let $(V(t), \overline{V}(t))_{t \geq 0}$ be the stochastic Euler dynamics \eqref{eq_SED} corresponding to $u' = Au$, $u(0) = u_0$, with $A \in \mathbb{R}^{d \times d}$, and let $(K(t))_{t \geq 0}$ be the associated Poisson process. Then, 
    \begin{enumerate}
    \item[(i)] $\mathbb{E}[\|V(\varepsilon)- u(\varepsilon)\|^2| K(\varepsilon) = k]^{1/2}  =  O(\varepsilon^2; \varepsilon \downarrow 0)$ $(k \in \mathbb{N}_0)$,
        
        \item[(ii)] $\mathbb{E}[\|V(\varepsilon)- u(\varepsilon)\|^2]^{1/2}  =  O(\varepsilon^2; \varepsilon \downarrow 0)$.
    \end{enumerate}
\end{theorem}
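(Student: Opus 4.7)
The strategy is to derive a deterministic pathwise bound on $\|V(\varepsilon) - u(\varepsilon)\|$ of order $\varepsilon^2$ that is uniform in $k$ and in the realisations of the random timesteps, from which both (i) and (ii) follow at once.

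First I would exploit the conditional representation stated just before the theorem. On the event $\{K(\varepsilon)=k\}$, set $s_0 := \varepsilon - T_k$ and $s_j := H_j$ for $1\le j\le k$, so $s_j \geq 0$ and $\sum_{j=0}^k s_j = \varepsilon$. Since all the factors $I_d + s_j A$ commute,
\[
V(\varepsilon) \;=\; \prod_{j=0}^k(I_d + s_j A)\,u_0 \;=\; \sum_{m=0}^{k+1} e_m(s_0,\ldots,s_k)\,A^m u_0,
\]
where $e_m$ denotes the $m$-th elementary symmetric polynomial. The exact solution is $u(\varepsilon) = \exp(\varepsilon A) u_0 = \sum_{m=0}^\infty (\varepsilon^m/m!) A^m u_0$; the $m=0,1$ coefficients coincide since $e_0(s) = 1$ and $e_1(s) = \varepsilon$. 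The crux is therefore to control $|e_m(s) - \varepsilon^m/m!|$ for $m \geq 2$. Expanding $\varepsilon^m = \bigl(\sum_j s_j\bigr)^m$ via the multinomial theorem, the exponent vectors with entries in $\{0,1\}$ contribute exactly $m!\,e_m(s)$, while all remaining contributions are nonnegative; together with the obvious $e_m(s)\ge 0$ this gives the Maclaurin-type bound $0 \leq e_m(s) \leq \varepsilon^m/m!$, and in particular $|e_m(s) - \varepsilon^m/m!| \leq \varepsilon^m/m!$ deterministically.

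Summing against $\|A^m u_0\| \leq \|A\|^m \|u_0\|$ yields the pathwise estimate
\[
\|V(\varepsilon) - u(\varepsilon)\| \;\leq\; \|u_0\|\sum_{m=2}^\infty \frac{\varepsilon^m \|A\|^m}{m!} \;=\; \|u_0\|\bigl(e^{\varepsilon\|A\|} - 1 - \varepsilon\|A\|\bigr) \;\leq\; \tfrac{1}{2}\,\varepsilon^2\,\|A\|^2\,\|u_0\|\,e^{\varepsilon\|A\|},
\]
uniformly in $k$ and in the admissible $(s_0,\ldots,s_k)$. Squaring and taking conditional expectation immediately yields (i); summing against the Poisson law of $K(\varepsilon)$ via the tower property yields (ii), since the bound is independent of $k$.

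The main obstacle I anticipate is precisely obtaining a bound on the elementary symmetric polynomial coefficients that is uniform in $k$: without it, one would have to integrate $e_m(s)$ against the uniform law on $S_{k,\varepsilon}$ provided by Lemma~\ref{lemma_uniform} and then average over the Poisson distribution of $K(\varepsilon)$, which is feasible but considerably more tedious. The Maclaurin-type inequality sidesteps this altogether; Lemma~\ref{lemma_uniform} would only become essential if one wanted a sharp leading-order constant, which is governed by the leading error term $-\tfrac{1}{2}\,A^2 u_0\cdot \sum_j s_j^2$ and hence by $\mathbb{E}[\sum_j s_j^2 \mid K(\varepsilon)=k]$.
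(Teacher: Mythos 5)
Your proof is correct and takes a genuinely cleaner route than the paper's. The key observation you exploit is that, conditionally on $K(\varepsilon)=k$ with timesteps $(s_0,\dots,s_k)$, the one-step coefficients are exactly the elementary symmetric polynomials $e_m(s)$, that $e_0(s)=1$ and $e_1(s)=\varepsilon$ automatically, and that the multinomial expansion of $(\sum_j s_j)^m$ yields the pathwise inequality $0\le e_m(s)\le \varepsilon^m/m!$ for $m\ge 2$. This gives a deterministic bound $\|V(\varepsilon)-u(\varepsilon)\|\le \tfrac12\varepsilon^2\|A\|^2\|u_0\|e^{\varepsilon\|A\|}$ that is uniform over $k$ and over the realisation of the timesteps, so both the conditional statement (i) and the unconditional statement (ii) follow at once by the tower property. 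The paper instead proves (i) by computing conditional expectations against the uniform law on the simplex $S_{k,\varepsilon}$ (Lemma~\ref{lemma_uniform}), identifying the lowest-order monomials in a polynomial integrand, and then proves (ii) by bounding the $k$-dependent constants $C_k$ and summing against $\mathbb{P}(K(\varepsilon)=k)$; this yields a final bound $\varepsilon^4(1+\|A\|)^2\exp\bigl((2\varepsilon\|A\|+\varepsilon\|A\|^2)/h\bigr)$ that degrades as $h\downarrow 0$, a feature the paper explicitly remarks on. Your pathwise bound does not depend on $h$ at all, so it is strictly stronger for statement (ii) and sidesteps the probabilistic bookkeeping entirely; the only thing it gives up is access to sharp leading-order constants (and the variance information stated after the theorem), which is precisely what Lemma~\ref{lemma_uniform} would be needed for, as you correctly note.
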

\begin{proof} We aim to show a second order local truncation error. Thus, in the following, we can replace $u(\varepsilon) = \exp(\varepsilon A)u_0$ by its first-order Taylor approximation $u_0 + \varepsilon Au_0$.

\noindent    (i) Let $k \in \mathbb{N}_0$. 
    We write $P_k := (I_d + h_k A)\cdots(I_d + h_1A).$ Then,
    \begin{align*}
       \mathbb{E}[V(\varepsilon)|K(\varepsilon) = k]  &= \frac{k!}{\varepsilon^k}\int_{S_{k,\varepsilon}} (I_d + (\varepsilon - h_1-\cdots-h_k) A)P_k u_0\mathrm{d}(h_1,\ldots,h_k) \\
       &= \frac{k!}{\varepsilon^k}\int_{S_{k,\varepsilon}} (I_d + \varepsilon A)P_k u_0  - (h_1+\cdots+h_k) AP_ku_0 \mathrm{d}(h_1,\ldots,h_k) \\
       &=  u_0 + \varepsilon A u_0 + \frac{k!}{\varepsilon^k}\int_{S_{k,\varepsilon}} \left((I_d + \varepsilon A)(P_k-I_d)  - (h_1+\cdots+h_k) AP_k\right) u_0\mathrm{d}(h_1,\ldots,h_k).
    \end{align*}
    We first see that the integrand is a polynomial with constant term $=0$.
    If we study the terms in the integrand above that are linear in $h_1,\ldots,h_k$, we see that 
    $$
   (I_d + \varepsilon A)( h_kA + \cdots + h_1A)u_0 - (h_1 + \cdots + h_k)Au_0 = \varepsilon (h_1 + \cdots + h_k)A^2u_0.
    $$
    Thus, the integrand above can be written as a sum of monomials with the lowest order being 1.
 If we assume that $\varepsilon \in (0,1)$, we can find a constant $C_k >0$ such that
 \begin{align*}
     \mathbb{E}&[\|V(\varepsilon)- u_0 - \varepsilon A u_0 \|^2|K(\varepsilon) = k] \leq \frac{k!}{\varepsilon^k}\int_{S_{k,\varepsilon}} C_k \varepsilon^2\sum_{i,j=1}^k h_ih_j\mathrm{d}(h_1,\ldots,h_k) = O(\varepsilon^4; \varepsilon \downarrow 0),
 \end{align*}
  where we obtain the order by bounding $\sum_{i,j=1}^k h_ih_j \leq \varepsilon^2$.

   \noindent (ii) Our goal is to employ the law of total expectation
    $$
     \mathbb{E}[\|V(\varepsilon)- u_0 - \varepsilon A u_0\|^2] = \sum_{k=0}^\infty  \mathbb{E}[\|V(\varepsilon)- u_0 - \varepsilon A u_0\|^2|K(\varepsilon) = k] \mathbb{P}(K(\varepsilon) = k).
    $$
 To employ the bound from (i), we need to show that the $C_k>0$  do not diverge too quickly as $k \rightarrow \infty$. Under the hypothesis that $K(\varepsilon) = k$, we have 
    \begin{align*}
       \|V(\varepsilon)- u_0 - \varepsilon A u_0\|^2 &\leq \|I_d + (\varepsilon - H_1- \cdots - H_k)A\|^2 \prod_{i = 1}^k\|I_d + H_i A\|^2\|u_0\|^2 \\ &\leq  (1 + \varepsilon\|A\|)^2 \prod_{i = 1}^k(1 + H_i \|A\|)^2\|u_0\|^2.
    \end{align*}
    $C_k$ is chosen in (i) such that it bounds the sum of all constant factors that appear in the polynomial $$(h_1,\ldots,h_k) \mapsto\|V(\varepsilon)- u_0 - \varepsilon A u_0\|^2 =  \|\left((I_d + \varepsilon A)(P_k-I_d)  - (h_1+\cdots+h_k) AP_k\right) u_0\|^2.$$
   We obtain the sum of constant factors in the polynomial by evaluating it for $(1,\ldots,1)$ and then choose
   $$
   C_k := (1+\|A\|)^{2k+2} \|u_0\|^2.
   $$
   as an upper bound.
    Now, we have 
    \begin{align*}
         \mathbb{E}[\|V(\varepsilon)- u_0 - \varepsilon A u_0\|^2] &\leq \sum_{k=0}^\infty (1+\|A\|)^{2k+2} \varepsilon^{4} \mathbb{P}(K(\varepsilon) = k)
        \\ &= \varepsilon^{4} (1+\|A\|)^{2} \sum_{k=0}^\infty  \left(\frac{\varepsilon(1+\|A\|)^{2}}{h}\right)^k \frac{\exp(-\varepsilon/h)}{k!}
        \\ &= \varepsilon^{4} (1+\|A\|)^{2}\exp((2\varepsilon \|A\|+\varepsilon\|A\|^2)/h)
    \end{align*}
    and, thus, proven (ii).
\end{proof}
One motivation for the local root mean square truncation error has been the behaviour of the estimator when $h\downarrow 0$.
In the end of the proof of Theorem~\ref{thm:RMSTE}(ii), we develop the error bound
 \begin{align*}
         \mathbb{E}[\|V(\varepsilon)- u(\varepsilon)\|^2] &\leq  \varepsilon^{4} (1+\|A\|)^{2}\exp((2\varepsilon \|A\|+\varepsilon\|A\|^2)/h).
    \end{align*}
   Here, if $h \downarrow 0$, the bound diverges, which is in slight contrast to the (weaker) convergence results obtained in Proposition~\ref{prop_unif} and Theorem~\ref{thm_unif_Lip}. However, if we choose $h = O(\varepsilon; \varepsilon \downarrow 0)$, we still obtain convergence in the described order -- an improvement over our preliminary discussion of truncation errors for stochastic Euler dynamics.

We finally note that the root mean square errors in Theorem~\ref{thm:RMSTE} also allow us to estimate the variance of $V(\varepsilon)$ using the simple inequality
$$
\mathbb{E}[\|V(\varepsilon)- \mathbb{E}[V(\varepsilon)]\|^2] \leq \mathbb{E}[\|V(\varepsilon)- u(\varepsilon)\|^2]  =  O(\varepsilon^4; \varepsilon \downarrow 0).
$$

\section{Stability} \label{Sec_Stab}
We now study the stability of the stochastic Euler dynamics, i.e., we aim to show for certain examples that the asymptotic behaviour of $(V(t))_{t \geq 0}$ for certain $h > 0$ resembles the asymptotic behaviour of $\tgn{u(t)}$. We start with an intuitive discussion of the stability of the random timestep Euler method $(\widehat{V}_k)_{k=1}^\infty$ in Subsection~\ref{subsec_stab_JChain}. We then move on to studying stability for the stochastic Euler dynamics in terms of Foster--Lyapunov criteria in Subsections~\ref{Subs_FostLya_1D}, which also provide us with an asymptotic rate.

\subsection{Stability of the random timestep Euler method} \label{subsec_stab_JChain}
Let $a > 0$ and let $(u(t))_{t \geq 0}$ satisfy the linear ODE $\dot{u} = -au$ on $X = \mathbb{R}$. Similar to deterministic ODE solvers, we expect stability problems if the mean stepsize $h$ is chosen to be too large. Indeed, we are wondering how $ah$ needs to be chosen for $\hat{V}_k \rightarrow 0$, in an appropriate sense, as $k \rightarrow \infty$. We now derive conditions under which mean and second moment of the random timestep Euler method converge to zero as $k \rightarrow \infty$.

We first recall that $\hat{V}_{k+1} := (1-aH_k)\hat{V}_k$ ($k \in \mathbb{N}_0$)
for independent $H_1, H_2, \ldots \sim \mathrm{Exp}(\lambda)$ and obtain
$$
\mathbb{E}[\hat{V}_{k+1}] = \mathbb{E}[(1-aH_k)\hat{V}_{k}] = \mathbb{E}[1-aH_k]\mathbb{E}[\hat{V}_{k}] = (1-ah)\mathbb{E}[\hat{V}_{k}] = (1-ah)^{k+1}u_0.
$$
which converges geometrically to $0$ if $ah < 2$ -- and, thus, behaves identically to the forward Euler method with stepsize $h$. Next, we compute the second moment of the dynamical system:
$$
\mathbb{E}[\hat{V}_{k+1}^2] = \mathbb{E}[(1-aH_k)^2\hat{V}_{k}^2]  = \mathbb{E}[1-2aH_k+a^2H_k^2] \mathbb{E}[\hat{V}_{k}^2] = (1-2ah+ 2a^2h^2)^{k+1}u_0^2.
$$
Now $|1-2ah+ 2a^2h^2|<1$, if $ah<1$. Thus, we obtain a stricter stepsize restriction if also second moments are supposed to converge.  The results given here can be easily extended to more general linear ODEs $u' = -Au$, with symmetric positive definite $A$ -- we give a detailed explanation in the proof of Theorem~\ref{thm_stab_higherdim}.

Intuitively, stability of the random timestep Euler method should also imply stability of the stochastic Euler dynamics. Indeed, we obtain the same convergence criteria below when studying Foster--Lyapunov criteria. The Foster--Lyapunov criteria, however, additionally supply us with a continuous convergence rate of the dynamics to their stationary state. We can also see the convergence rate $|1-2ah+ 2a^2h^2|$ in the analysis above. However, $\mathbb{E}[\hat{V}^2_{k}]$ does not relate to a particular time $t \geq 0$. Thus, the rate above does not allow us to compare the convergence-to-stationarity behaviour of $(V(t))_{t \geq 0}$ to that of $(u(t))_{t \geq 0}$.
\subsection{Foster--Lyapunov criteria in the case of linear ODEs} \label{Subs_FostLya_1D}

We begin our study of Foster--Lyapunov criteria for the stochastic Euler dynamics by considering the the linear ODE $u' = -au$ on $X = \mathbb{R}$ for some $a > 0$. We are later able to use this one-dimensional result to study the higher dimensional version. The first step is to find an appropriate Lyapunov function for $(V(t),\overline{V}(t))_{t \geq 0}$. We consider a Lyapunov function with the following structure $$L(v,\overline{v}) := c_1v^2 + c_2\overline{v}^2 + c_3(v-\overline{v})^2 
\qquad (v, \overline{v} \in X),$$
with some constants $c_1,c_2,c_3 \geq 0$. When computing the action of $\mathcal{A}_h$ with respect to this Lyapunov function, we obtain
\begin{align*}
    \mathcal{A}_h L(v,\overline{v}) &= -a\left(\frac{\partial}{\partial v} L(v,\overline{v})\right)\overline{v} + (L(v,v) - L(v,\overline{v}))/h \\
    &= (c_2 - c_3)v^2/h  + (-c_2/h-(1/h +2a)c_3) \overline{v}^2  + 2((1/h-a)c_3 -ac_1)v\overline{v} 
\end{align*}
In the following lemma, we show that we can bound $\mathcal{A}_h L$ in terms of $-\kappa L$. This bound then implies that $\mathbb{E}[L(V(t),\overline{V}(t))]$ converges to zero at exponential speed with rate $- \kappa$. We state and prove this exponential bound formally in a proposition just below the lemma.
\begin{lemma} \label{Lemma_Lyap}Let $\mathcal{A}_h$ be the infinitesimal generator of the stochastic Euler dynamics \eqref{eq_SED} corresponding to the ODE $u' = -au$, with $a > 0$ and $L$ be the function introduced above. Moreover, we assume that $h > 0$ is chosen such that $ah <1$. Then,
$$
\mathcal{A}_h L(v,\overline{v}) \leq -\kappa L(v,\overline{v}) \qquad (v,\overline{v} \in X),
$$
with $c_1 = 1, c_2 = 0$, $c_3 := 1/\max\{(1/h -\kappa)/{\kappa},(\kappa-1/h+a)/a \}$, and $\kappa  \in (0,\min\{2a,1/(2h)\})$.
\end{lemma}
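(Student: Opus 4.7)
The plan is to reduce $\mathcal{A}_h L \leq -\kappa L$ to a pointwise non-positivity statement for a quadratic form in $(v,\overline{v}) \in \mathbb{R}^2$. Using the formula for $\mathcal{A}_h L$ already displayed above the Lemma, substituting the specified $c_1 = 1$ and $c_2 = 0$, and adding $\kappa L(v,\overline{v}) = \kappa v^2 + \kappa c_3(v-\overline{v})^2$, the Lyapunov inequality becomes the concrete algebraic claim that the resulting quadratic polynomial in $(v,\overline{v})$ is $\leq 0$ everywhere.

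I would handle the non-positivity by re-expressing $\mathcal{A}_h L + \kappa L$ in the non-negative basis $\{v^2, \overline{v}^2, (v-\overline{v})^2\}$, which matches the structure of $L$ itself and absorbs the $v\overline{v}$ cross term produced by the drift part of $\mathcal{A}_h$. A direct expansion writes the sum as $A v^2 + B \overline{v}^2 + C(v-\overline{v})^2$ with coefficients that are affine in $c_3$ (with parameters $a, h, \kappa$): the $\overline{v}^2$-coefficient simplifies to $B = a(c_3 - 1)$, the $v^2$-coefficient to $A = \kappa - a(1+c_3)$, and the $(v-\overline{v})^2$-coefficient to $C = a - c_3(1/h - a - \kappa) + \kappa c_3$ terms combined. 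Since each basis function is non-negative on $X^2$, showing $A, B, C \leq 0$ is sufficient for the Lyapunov bound.

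Each of the three non-positivity conditions then becomes an elementary scalar inequality in $c_3$. The $\overline{v}^2$-coefficient yields the upper bound $c_3 \leq 1$, while the $v^2$- and $(v-\overline{v})^2$-coefficients yield two lower bounds whose reciprocals are precisely the two terms appearing inside the $\max$ in the definition of $c_3$; the specified $c_3$ is therefore by construction the smallest value simultaneously compatible with both lower bounds. The main obstacle is checking that this choice does not exceed the upper bound $c_3 \leq 1$ and that the denominators in the two ratios are positive. The hypothesis $\kappa < 1/(2h)$ controls one ratio directly by giving $\kappa < 1/h - \kappa$; the other requires combining $\kappa < 2a$ with $\kappa < 1/(2h)$ and the standing $ah < 1$ through a short case distinction based on the sign of $1/h - a - \kappa$. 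Once the three scalar inequalities are verified with the specified $c_3$, the Lyapunov bound is immediate from non-negativity of the three basis functions.
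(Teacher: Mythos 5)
Your overall strategy---rewriting $\mathcal{A}_hL+\kappa L$ in the non\-negative basis $\{v^2,\overline{v}^2,(v-\overline{v})^2\}$ and forcing all three coefficients to be non-positive---is structurally sound, and in fact more careful than the argument the paper gives, which compares coefficients in the monomial basis $\{v^2,\overline{v}^2,v\overline{v}\}$ (where dominating the $v\overline{v}$-coefficient alone proves nothing, since $v\overline{v}$ changes sign). Your coefficients $A=\kappa-a(1+c_3)$, $B=a(c_3-1)$, $C=a-c_3(1/h-a-\kappa)$ are the correct ones for the generator as defined: note they do \emph{not} follow from the display preceding the lemma, whose $\overline{v}^2$-coefficient carries the drift contribution $+2ac_3\overline{v}^2$ with the wrong sign; the stated value of $c_3$ is obtained by solving the system of inequalities coming from that display. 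Your $B$-condition does go through: $\kappa<1/(2h)$ forces $(1/h-\kappa)/\kappa>1$, hence $c_3<1$ and $B<0$.

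The gap is in your final step, which you assert rather than carry out, and which cannot be carried out. First, the claimed correspondence is wrong: $A\le0$ gives $c_3\ge(\kappa-a)/a$ and $C\le0$ gives $c_3\ge a/(1/h-a-\kappa)$ (the latter only meaningful when $1/h-a-\kappa>0$); neither reciprocal is one of the two terms inside the $\max$, and $c_3=1/\max\{\cdot,\cdot\}$ is a \emph{minimum} of two quantities, i.e.\ it is built from upper bounds on $c_3$, not from your two lower bounds. Second, the ``short case distinction on the sign of $1/h-a-\kappa$'' cannot be completed: the hypotheses permit $1/h<a+\kappa$ (take $a=0.9$, $h=1$, $\kappa=0.4$, so $ah<1$ and $\kappa<\min\{2a,1/(2h)\}=0.5$), and then $C=a-c_3(1/h-a-\kappa)\ge a>0$ for \emph{every} $c_3\ge0$. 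This is not merely a failure of the sufficient condition: with the prescribed $c_3=2/3$ one computes directly $\mathcal{A}_hL(0,1)=(-\tfrac43)(-\tfrac{9}{10})+(0-\tfrac23)/1=\tfrac{8}{15}$ while $-\kappa L(0,1)=-\tfrac{4}{15}$, so the asserted inequality itself fails at $(v,\overline{v})=(0,1)$. Likewise $A>0$ occurs for admissible parameters (e.g.\ $a=1$, $h=0.1$, $\kappa=1.5$ gives $c_3=3/17$ and $A=11/34$). So your computation is correct exactly up to the point where it collides with the statement; completing the proof requires either additional restrictions such as $\kappa\le a$ and $\kappa+a<1/h$ together with a $c_3$ re-derived from your inequalities $c_3\le1$, $c_3\ge(\kappa-a)/a$, $c_3\ge a/(1/h-a-\kappa)$, or working with the sign-corrected generator and a correspondingly corrected constant.
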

\begin{proof}    
    We aim to show that there is some $\kappa > 0$ with
\begin{align*}
       \begin{cases}
      c_2/h - c_3/h  &\leq  -\kappa c_1-\kappa c_3 \\ 
       -c_2/h-(1/h +2a)c_3  &\leq  -\kappa c_2 -\kappa dc_3 \\
        (1/h-a)c_3 -ac_1 &\leq \kappa  c_3
    \end{cases} \quad  \Leftrightarrow \quad
       \begin{cases}
       \kappa  c_1/c_3+c_2/(c_3h)  &\leq 1/h-\kappa  \\ 
        (\kappa -1/h) c_2/c_3 &\leq  1/h +2a-\kappa  \\
         c_1/c_3 &\geq (\kappa -1/h+a)/a.
    \end{cases}
\end{align*}
The inequality in the top right  implies that $\kappa  < h^{-1}$.
Assuming that $\kappa  < 2a$, we have
\begin{align*}
       \begin{cases}
       \kappa  c_1/c_3+ c_2/(c_3h)  &\leq  1/h-\kappa  \\ 
        c_2/c_3 &\geq 0 > \frac{1/h +2a-\kappa }{\kappa -1/h} \\
         c_1/c_3 &\geq (\kappa -1/h+a)/a.
    \end{cases}
\end{align*}
 Now, we can certainly choose $c_2/c_3 := 0$ and $c_1/c_3 := \max\{(1/h -\kappa )/{\kappa },(\kappa -1/h+a)/a \}$. If $(1/h -\kappa )/{\kappa } \geq (\kappa -1/h+a)/a$, all of the inequalities are already satisfied. If otherwise, $(1/h -\kappa )/{\kappa } < (\kappa -1/h+a)/a$, we need to  make sure that 
 $$
\kappa^2-\kappa/h+a\kappa  \leq a/h-a\kappa  \Leftrightarrow 
 -\sqrt{a^2 + 1/(2h)^2}- a +1/(2h)\leq \kappa  \leq \sqrt{a^2 + 1/(2h)^2} - a+1/(2h).
 $$
 which is certainly satisfied if $\kappa  \in (0,1/(2h))$.
\end{proof}

\begin{proposition} \label{prop_con er}
Let $(V(t), \overline{V}(t))_{t \geq 0}$ be the stochastic Euler dynamics \eqref{eq_SED} corresponding to the ODE $u' = -au$, with $a > 0$. Moreover, let $ah<1$, $\kappa  \in (0, \min\{2a,1/(2h)\})$, and $c_3 := 1/\max\{(1/h -\kappa )/{\kappa },(\kappa -1/h+a)/a \}$. Then, 
$$
\mathbb{E}\left[V(t)^2\right] + {c_3}\mathbb{E}\left[(V(t)-\overline{V}(t))^2\right] \leq \exp(-\kappa  t)u_0^2.
$$
\end{proposition}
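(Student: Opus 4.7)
The plan is to derive the result as a direct consequence of Lemma~\ref{Lemma_Lyap} via Dynkin's formula and Grönwall's inequality, applied to the Lyapunov function
$$
L(v,\overline{v}) = v^2 + c_3(v-\overline{v})^2,
$$
i.e.\ with $c_1 = 1$, $c_2 = 0$, $c_3$ as specified. This is the natural pairing: the right-hand side $u_0^2 \exp(-\kappa t)$ is exactly $L(u_0, u_0) \exp(-\kappa t)$, since $L(u_0, u_0) = u_0^2 + c_3 \cdot 0 = u_0^2$, so the desired inequality is precisely $\mathbb{E}[L(V(t), \overline{V}(t))] \leq L(u_0, u_0) \exp(-\kappa t)$.

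The main steps are as follows. First, I would verify that $t \mapsto \mathbb{E}[L(V(t), \overline{V}(t))]$ is well-defined and finite for all $t \geq 0$. In the linear one-dimensional setting $f(v) = -av$, between jumps $V$ decays deterministically with $|V(t)| \leq |V(T_{k-1})|(1 + at)$ on each interval, and $\overline{V}$ is piecewise constant; the jump chain $(\widehat{V}_k)_k$ satisfies $\widehat{V}_{k+1} = (1-aH_k)\widehat{V}_k$, so by the second-moment computation sketched in Subsection~\ref{subsec_stab_JChain}, second moments are uniformly bounded on compact time intervals (in fact they decay geometrically under $ah<1$). This ensures that the local martingale appearing in Dynkin's formula is a true martingale.

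Next, by Dynkin's formula for the piecewise deterministic Markov process $(V, \overline{V})$ with infinitesimal generator $\mathcal{A}_h$, applied to $L \in C^1(X \times X)$,
$$
\mathbb{E}[L(V(t),\overline{V}(t))] = L(u_0,u_0) + \int_0^t \mathbb{E}[\mathcal{A}_h L(V(s),\overline{V}(s))]\,\mathrm{d}s.
$$
Invoking Lemma~\ref{Lemma_Lyap}, which gives $\mathcal{A}_h L(v,\overline{v}) \leq -\kappa L(v, \overline{v})$ pointwise under the standing assumptions on $h$, $\kappa$, and $c_3$, and taking expectations,
$$
\mathbb{E}[L(V(t),\overline{V}(t))] \leq u_0^2 - \kappa\int_0^t \mathbb{E}[L(V(s),\overline{V}(s))]\,\mathrm{d}s.
$$
Grönwall's inequality then yields $\mathbb{E}[L(V(t),\overline{V}(t))] \leq u_0^2 \exp(-\kappa t)$, which unwinds to the claimed bound on $\mathbb{E}[V(t)^2] + c_3\mathbb{E}[(V(t)-\overline{V}(t))^2]$.

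The main technical obstacle is the integrability/martingale justification for Dynkin's formula, since $L$ is unbounded and $X = \mathbb{R}$ is non-compact. The cleanest route is a standard localisation argument: apply Dynkin's formula on stopped processes $(V(t \wedge \tau_N), \overline{V}(t \wedge \tau_N))$ with $\tau_N := \inf\{t \geq 0 : |V(t)| \vee |\overline{V}(t)| \geq N\}$, derive the Grönwall bound for the stopped expectations (which are finite a priori), and then pass to $N \to \infty$ using Fatou's lemma together with the a priori second-moment bound from the jump-chain analysis to conclude that $\tau_N \to \infty$ almost surely and the limit inequality holds. Everything else is a direct combination of Lemma~\ref{Lemma_Lyap} and Grönwall.
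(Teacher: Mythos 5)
Your proposal follows essentially the same route as the paper: the paper combines Lemma~\ref{Lemma_Lyap} with a citation to Meyn--Tweedie (Theorem~1.1 of \cite{MeynTweedieIII}) to obtain precisely the integral inequality you derive via Dynkin's formula, and then applies Gr\"onwall. Your version merely unpacks the Meyn--Tweedie black box by spelling out the Dynkin/localisation argument and the a priori second-moment bound from Subsection~\ref{subsec_stab_JChain}, which is a reasonable way to make the reference self-contained.
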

\begin{proof}
    If we apply \cite[Theorem 1.1]{MeynTweedieIII} to the bound shown in Lemma~\ref{Lemma_Lyap}, we obtain 
    $$ \mathbb{E}[V(t)^2 + c_3 (V(t)-\overline{V}(t))^2]\leq - \kappa \int_0^t\mathbb{E}[V(t)^2 + c_3 (V(t)-\overline{V}(t))^2]\mathrm{d}s +u_0^2 + c_3(u_0 - u_0)^2. $$
An application of the Gr\"onwall inequality then implies the statement of the proposition.
\end{proof}

This proposition implies that if we choose $h$ to be small (namely $h \in (0, 1/(4a))$), the speed of convergence to stationarity of the stochastic Euler dynamics is at most infinitesimal smaller than that of the original dynamical system $(u(t))_{t \geq 0}$. If, on the other hand, $h \in (1/(4a), 1/a)$, we obtain a slower rate that depends on $h$. This is surprising as the forward Euler method for $u'=-au$ actually converges faster for stepsizes close to $1/a$. From the discussion in Subsection~\ref{subsec_stab_JChain}, we would not expect stability of the second moment of $(V(t))_{t \geq 0}$, if $h > 1/a$.

Based on Proposition~\ref{prop_con er}, we can now study the stability of higher dimensional linear ODEs.

\begin{theorem} \label{thm_stab_higherdim}
    Let $A \in \mathbb{R}^{d \times d}$ be symmetric positive definite and let $(V(t),\overline{V}(t))_{t \geq 0}$ be the stochastic Euler dynamics \eqref{eq_SED} corresponding to $u' = -Au$. Let $\lambda_1\leq \cdots \leq \lambda_d$ be the eigenvalues of $A$,let $h$ be chosen such that $\lambda_{d} h < 1$, and let $\kappa'   \in (0, \min\{2\lambda_{1}, 1/(2h)\})$. Then, 
    $$
    \mathbb{E}\left[\|V(t)\|^2\right]  + c_3'\mathbb{E}[\|V(t) - \overline{V}(t)\|^2] \leq \exp(-\kappa' t)\|u_0\|^2,
    $$
    with $c_3' := \min\{ 1/\max\{(1/h -\kappa )/{\kappa },(\kappa -1/h+\lambda_\ell)/\lambda_\ell \}: \ell \in \{1,\ldots,d\}\}$.
\end{theorem}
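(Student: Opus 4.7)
The plan is to diagonalise $A$ and reduce to the one-dimensional Proposition~\ref{prop_con er}. Since $A$ is symmetric positive definite, write $A = QDQ^T$ with $Q \in \mathbb{R}^{d \times d}$ orthogonal and $D = \mathrm{diag}(\lambda_1,\ldots,\lambda_d)$. Setting $\tilde{V}(t) := Q^T V(t)$ and $\tilde{\overline{V}}(t) := Q^T \overline{V}(t)$, the defining relations of the stochastic Euler dynamics \eqref{eq_SED} transform into
\begin{align*}
    \tilde{V}'(t) &= -D\tilde{\overline{V}}(t), \qquad \tilde{\overline{V}}'(t) = 0 \qquad (t \in (T_{k-1}, T_k],\, k \in \mathbb{N}), \\
    \tilde{\overline{V}}(T_k) &= \tilde{V}(T_k-) \qquad (k \in \mathbb{N}),
\end{align*}
with initial value $\tilde{u}_0 := Q^T u_0$. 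Because $D$ is diagonal, the $d$ coordinates $(\tilde{V}_\ell(t), \tilde{\overline{V}}_\ell(t))_{t\geq 0}$ each form a one-dimensional stochastic Euler dynamics for the scalar ODE $u' = -\lambda_\ell u$, all sharing the common jump times $T_1, T_2, \ldots$. Crucially, the statement of Proposition~\ref{prop_con er} is an inequality of expectations, so the coupling through the jump times is irrelevant.

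Next I would apply Proposition~\ref{prop_con er} to each coordinate. For every $\ell \in \{1,\ldots,d\}$ the assumptions are met: $\lambda_\ell h \leq \lambda_d h < 1$, and $\kappa' < 2\lambda_1 \leq 2\lambda_\ell$ together with $\kappa' < 1/(2h)$ give $\kappa' \in (0, \min\{2\lambda_\ell, 1/(2h)\})$. Set
$$
c_3^{(\ell)} := 1/\max\{(1/h - \kappa')/\kappa',\, (\kappa' - 1/h + \lambda_\ell)/\lambda_\ell\},
$$
so that $c_3' = \min_{\ell} c_3^{(\ell)}$ by definition. Proposition~\ref{prop_con er} yields
$$
\mathbb{E}\!\left[\tilde{V}_\ell(t)^2\right] + c_3^{(\ell)}\,\mathbb{E}\!\left[(\tilde{V}_\ell(t) - \tilde{\overline{V}}_\ell(t))^2\right] \leq \exp(-\kappa' t)\, \tilde{u}_{0,\ell}^{\,2}.
$$

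Since $c_3' \leq c_3^{(\ell)}$ and the terms are nonnegative, I can replace $c_3^{(\ell)}$ by $c_3'$ on the left-hand side without affecting the inequality. Summing over $\ell = 1,\ldots, d$ and using the isometry of $Q$ (so that $\sum_\ell \tilde{V}_\ell(t)^2 = \|V(t)\|^2$, $\sum_\ell (\tilde{V}_\ell(t)-\tilde{\overline{V}}_\ell(t))^2 = \|V(t)-\overline{V}(t)\|^2$, and $\sum_\ell \tilde{u}_{0,\ell}^{\,2} = \|u_0\|^2$) delivers the claimed bound. The only step that requires a moment of care is the verification that the change of variables genuinely decouples the dynamics into one-dimensional stochastic Euler dynamics with the same jump times; once that is established, the argument is essentially a coordinatewise application of the one-dimensional result together with the minimisation defining $c_3'$.
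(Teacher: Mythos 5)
Your proof is correct and follows essentially the same route as the paper's: diagonalise $A$ by an orthogonal similarity, observe that the transformed dynamics decouple into $d$ scalar stochastic Euler dynamics sharing the common jump times, apply Proposition~\ref{prop_con er} coordinatewise, bound each $c_3^{(\ell)}$ below by $c_3'$, and sum using the orthogonal invariance of $\|\cdot\|$. Your presentation is marginally cleaner in that you transform the defining ODE-with-jumps system directly and apply a single common $\kappa'$ throughout, whereas the paper decouples via the explicit product formula for $V(t)$ conditional on $K(t)=k$ and introduces per-coordinate rates $\kappa^{(\ell)}$ before minimising, but these are presentational differences rather than a different argument.
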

\begin{proof}
 Since $A$ is symmetric, there is a similarity transform $A = S\Lambda S^T$, where $S$ is an orthonormal matrix consisting of the eigenvectors of $A$ and $\Lambda$ is the diagonal matrix containing the associated real eigenvalues on the diagonal. Then, we can write $V(t)$ under the condition that $K(t) = k$ as 
 \begin{align*}
(I_d -(t-H_1-\cdots - &H_k)A) (I_d -H_k A)\cdots(I_d -H_1A) u_0 \\&=  S(I_d -(t-H_1-\cdots - H_k)\Lambda)(I_d -H_k\Lambda)\cdots(I_d -H_1\Lambda)S^Tu_0,
 \end{align*}
 for $t \geq 0.$ Thus, we can assume that $A := \Lambda$: we just measure convergence in $\|S \cdot\| = \|\cdot \|$ and consider the initial value $S^{T}u_0$ that satisfies $\|S^{T}u_0\| = \|u_0 \|$. Then, however, we have $d$ processes that are only coupled through their jump times. If we apply Proposition~\ref{prop_con er} to all $d$ components of $V(t) = (V_1(t),\ldots,V_d(t))$ and $\overline{V}(t)= (\overline{V}_1(t),\ldots,\overline{V}_d(t))$ with the stepsize restriction $\lambda_{d} h < 1$, we obtain 
\begin{equation} \label{eq:ineq_Vell}
    \sum_{\ell =1}^d \left(\mathbb{E}\left[V_\ell(t)^2\right] + {c_3^{(\ell)}}\mathbb{E}\left[(V_\ell(t)-\overline{V}_\ell(t))^2\right] \right)\leq  \sum_{\ell =1}^d \exp(-\kappa_\ell  t)u_{0,\ell}^2
\end{equation}

 where $(c_3^{(1)},\ldots, c_3^{(d)})$ and $(\kappa^{(1)},\ldots, \kappa^{(d)})$ are the constants we obtain when applying Proposition~\ref{prop_con er} to each of $(V_1(t), \overline{V}_1(t))_{t \geq 0},\ldots,(V_d(t), \overline{V}_d(t))_{t \geq 0}$. We denote $c_3' = \min \{c_3^{(1)},\ldots, c_3^{(d)}\}$ and $\kappa' = \min\{\kappa^{(1)},\ldots, \kappa^{(d)}\}$. Then, we can employ \eqref{eq:ineq_Vell} to obtain the asserted inequality.
\end{proof}

\section{Second-order stochastic Euler dynamics} \label{Sec_higherorder}
The forward Euler method is a first-order numerical integrator. There are several ways to construct ODE solvers with higher convergence orders. The explicit midpoint rule, for instance, is a  second-order method. It is defined through the following recursive relationship:
$$\hat{y}_k= \hat{y}_{k-1} + {h_k}f\left(\hat{y}_{k-1}+ \frac{h_k}{2} f(\hat{y}_{k-1})\right) \quad (k \in\mathbb{N}), \qquad \hat{y}_0 = u_0.$$
Of course, this method can be employed with a random timestep size. The derivation and especially the analysis of associated \emph{stochastic explicit midpoint dynamics} should now be much trickier as a natural companion process would not be piecewise constant with jumps but piecewise linear with jumps. We defer the treatment of this and other Runge--Kutta-based stochastic dynamics to future work and instead follow a different idea:

The stochastic Euler dynamics construct a piecewise linear path by using a first-order ODE with piecewise constant right-hand sides. We can similarly construct a piecewise polynomial path by constructing an ODE that has constant higher derivatives. For a second order dynamics, for instance, we can consider
$\ddot{x} = a_1, x(0) = a_2, \dot{x}(0) = a_3$ for constant vectors $a_1, a_2, a_3 \in X$, which yields $x(t) = a_1t^2/2 + a_2 + a_3 t$ $(t \geq 0)$. Hence, rather than having a piecewise constant first derivative as in the stochastic Euler dynamics, we obtain a piecewise quadratic path through a piecewise constant second derivative.

We now need to assume some additional smoothness in $(0, \infty) \ni t \mapsto u(t)$ to then choose $a_1, a_2, a_3$ by matching terms in the Maclaurin expansion of $(u(t))_{t \geq 0}$. For some $h_1 > 0$, we can write the Maclaurin expansion as
\begin{align*}
    u(h_1) &= u(0) + u'(0)h_1 + u''(0)\frac{h_1^2}{2} + u^{(3)}(0)\frac{h_1^3}{6} + \cdots,
\end{align*}
which is either an infinite sum or ends with a remainder term.  
We now compare the first three terms in the Maclaurin expansion with the polynomial above and obtain 
$$
a_1 = u''(0), \qquad a_2 = u(0), \qquad a_3 = u'(0).
$$
Whilst we are usually able to evaluate $u' = f(u)$, additional work is necessary to obtain the second derivative $u'' = \mathrm{J}f(u)\cdot f(u)$. In the following we assume to be able to access this second derivative. Then, with the previous discussion in mind, we  define the \emph{second-order stochastic Euler dynamics} $(Y_1(t), Y_2(t), \overline{Y}(t))_{t \geq 0}$ through
\begin{align*}
Y'_1(t) &= Y_2(t) &(t >0) \\
    Y'_2(t) &= \mathrm{J}f(\overline{Y}(t))\cdot f(\overline{Y}(t))  &(t \in (T_{k-1}, T_k], k \in \mathbb{N}) \\
    \overline{Y}'(t) &= 0  &(t \in (T_{k-1}, T_k], k \in \mathbb{N})\\
    Y_1(T_{k-1}) &= Y_1(T_{k-1}-) &(k \in \mathbb{N}) \\
    Y_2(T_{k-1}) &= Y_2(T_{k-1}-) &(k \in \mathbb{N}) \\
    \overline{Y}(T_{k-1}) &= Y_1(T_{k-1}-) &(k \in \mathbb{N}) \\
    Y_1(0) &= \overline{Y}(0) =  u_0 &\\
    Y_2(0) &= f(u_0) &
\end{align*}
where the $(T_k)_{k\in \mathbb{N}}$ still denote appropriate jump times with i.i.d. increments $H_1, H_2,\ldots \sim \mathrm{Exp}(1/h)$. Again, it is easy to see that $(Y_1(t), Y_2(t), \overline{Y}(t))_{t \geq 0}$ is a well-defined Feller process with infinitesimal generator 
$$
\mathcal{A}_h^2 \varphi(y_1,y_2,\overline{y}) = \left\langle \nabla_{u} \varphi(y_1,y_2,\overline{y}),\begin{pmatrix}
    y_2 \\ \mathrm{J}f(\overline{y})\cdot f(\overline{y})
\end{pmatrix}  \right\rangle  + h^{-1} (\varphi(y_1,y_2,y_1) - \varphi(y_1,y_2,\overline{y}))
$$
for an appropriate test function $\varphi: X^3 \rightarrow \mathbb{R}$. Since only the second derivative of the path $(Y_1(t))_{t \geq 0}$ is now piecewise constant,  $(Y_1(t))_{t \geq 0}$ turns out to be a continuously differentiable approximation of $(u(t))_{t \geq 0}$. Conceptually, the ansatz for our ODE solution is, thus, now a smooth chain of quadratic splines with random interpolation points. A deterministic quadratic spline ansatz for the solution of ODEs has, for instance, previously been discussed by \cite{splines}. The preceeding discussion is \emph{easily} extended to orders beyond two, at least in cases in which higher order derivatives of $u$ are available. Random timestep explicit Runge-Kutta methods, such as the explicit midpoint rule above, should be able to circumvent this necessity.

We finish this section by studying the stability of the jump chain associated with the second order stochastic Euler dynamics $(\hat{Y}_{k,1}, \hat{Y}_{k,2})_{k=0}^\infty$, given by $(\hat{Y}_{k,1},\hat{Y}_{k,2}) := (Y_1(T_k), Y_2(T_k))$ $(k \in \mathbb{N})$ for a one-dimensional linear ODE.  In Section~\ref{Sec_Numerical}, we numerically explore the local root mean square truncation error of the second-order stochastic Euler dynamics (and, thus, actually justify its name).

\subsection{Stability of the second order stochastic Euler dynamics jump chain}
Similarly to our discussion in Subsection~\ref{subsec_stab_JChain}, we now study the stability of the jump chain $(\hat{Y}_{k,1},\hat{Y}_{k,2})_{k=0}^\infty$ when defined to approximate the linear ODE $\dot{u} = -a u$ on $X = \mathbb{R}$ for some $a > 0$. We actually focus only on the first component $(\hat{Y}_{k,1})_{k=0}^\infty$ here. We have
$$
\hat{Y}_{k,1} = \hat{Y}_{k-1,1} - aH_k\hat{Y}_{k-1,1}  + \frac{a^2H_k^2}{2} \hat{Y}_{k-1,1} = \left(1 - aH_k + \frac{a^2H_k^2}{2}\right) \hat{Y}_{k-1,1}  \qquad (k \in \mathbb{N}),
$$
where $H_1, H_2, \ldots \sim \mathrm{Exp}(h^{-1})$ and  $\hat{Y}_0 = u_0.$
 We again compute mean and second moment of the jump chain and investigate their longtime behaviour. We have
$$
\mathbb{E}[\hat{Y}_{k}] = \mathbb{E}\left[1 - aH_k + \frac{a^2H_k^2}{2}\right] \mathbb{E}[\hat{Y}_{k-1}] = \left(1 - ah + a^2h^2
\right)^ku_0,
$$
which converges to $0$ as $k \rightarrow \infty$ iff $ah < 1$. This condition is stricter than the condition we obtained in Subsection~\ref{subsec_stab_JChain} regarding the mean of the random timestep Euler method. Regarding the second moment, we see that
\begin{align*}
    \mathbb{E}[\hat{Y}_{k}^2] = \mathbb{E}\left[\left(1 - aH_k + \frac{a^2H_k^2}{2}\right)^2\right] \mathbb{E}[\hat{Y}_{k-1}^2] &=\mathbb{E}\left[1 - 2aH_k + 2a^2H_k^2 - {a^3H_k^3}  + \frac{a^4H_k^4}{4} \right]  \mathbb{E}[\hat{Y}_{k-1}^2]\\
    &= \left( 1 - 2ah + 4a^2h^2 - {6a^3h^3} + {6a^4h^4}\right)^k u_0^2 =: \alpha^k u_0^2.
\end{align*}
Again the variance converges to zero as $k \rightarrow \infty$, iff $|\alpha| < 1$. This  is equivalent to $$ah < \frac13\left(1- \sqrt[3]{\frac{2}{5+\sqrt{29}}} + \sqrt[3]{\frac{5+\sqrt{29}}{2}}\right) \approx 0.7181,$$
which again is a stricter stepsize restriction compared to what we have learnt about the random timestep Euler method.
Now, these results seem surprising at first: given that the second-order stochastic Euler dynamics is broadly motivated by higher-order Runge--Kutta methods, we would have expected that the new stability region is at least not smaller than that of the second-order stochastic Euler dynamics, especially given that the jump chain in the linear setting here is actually equivalent to a random timestep explicit midpoint rule. On the other hand, when approximating the stable trajectory $(u(t))_{t \geq 0}$ by piecewise polynomials, we approximate a stable function by functions that quickly diverge to $\pm \infty$ -- they especially diverge more quickly than the linear functions employed in the random timestep Euler method. If the polynomial pieces diverge more quickly, the interruptions due to jumps in the companion process $(\overline{Y}(t))_{t \geq 0}$ need to happen more frequently. Thus, in this situation, we may obtain a higher convergence order, but the method becomes less stable.

\section{Numerical experiments} \label{Sec_Numerical}
In the following, we illustrate and verify the theoretical results obtained throughout this work. We start with a stable linear ODE in one dimension in Subsection~\ref{Subs_Exp1D} and then move on to an underdamped harmonic oscillator in Subsection~\ref{Subs_ExpOsci}. We also explore two situations not covered by our theory: the local truncation error of second-order stochastic Euler dynamics and the stability of stochastic Euler dynamics in linear ODEs with complex-valued eigenvalues.
\subsection{One-dimensional linear problem} \label{Subs_Exp1D}
We first aim to verify and illustrate the results shown in Theorem~\ref{thm_det}, Theorem~\ref{thm:RMSTE}, and Proposition~\ref{prop_con er} using the one-dimensional linear model problem: $u' = -u, u(0) = 1$ on $X = \mathbb{R}$. In the context of this ODE, we also estimate the local root mean square truncation error of the second-order stochastic Euler dynamics.
\paragraph{Convergence of deterministic Euler dynamics.}
We begin with the deterministic Euler dynamics in the context of the linear model ODE above.
Here, we evaluate the analytical solution of the deterministic Euler dynamics -- the formula is stated in Example~\ref{Ex_Lin_1D_firstorder}. Then, we compute the distances between the deterministic Euler dynamics and the ODE solution $\|w(t) - u(t)\|$ and the distances between the deterministic Euler dynamics and its companion process $\|w(t) - \overline{w}(t)\|$ for $t \in \{0.01, 0.1, 1\}$ and $h \in \{10^{-4}, 1.5\cdot 10^{-4},\ldots,10^{0}\}$. We plot the results in Figure~\ref{fig_DED_error}. The results confirm the linear $O(h; h \downarrow 0)$ rate shown in Theorem~\ref{thm_det}.

\begin{figure}
    \centering
    \includegraphics[scale=0.8]{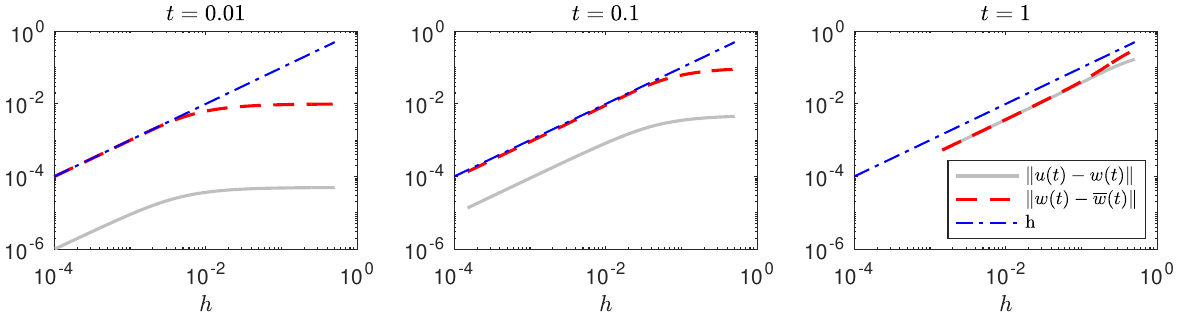}
    \caption{Distances between deterministic Euler dynamics $(w(t))_{t \geq 0}$ and ODE solution $(u(t))_{t \geq 0}$, as well as between deterministic Euler dynamics $(w(t))_{t \geq 0}$ and its companion process $(\overline{w}(t))_{t \geq 0}$ with respect to $h$ at  time points $t \in \{0.01, 0.1, 1\}$. Here, $(u(t))_{t \geq 0}$  and $(w(t),\overline{w}(t))_{t \geq 0}$ correspond to the linear model ODE $u' = -u, u(0) = 1$.}
    \label{fig_DED_error}
\end{figure}

\paragraph{Local root mean square truncation error.}
We simulate the stochastic Euler dynamics for the linear model problem to estimate the local root mean square truncation error $\mathbb{E}[(V(\varepsilon)-u(\varepsilon))^2]^{1/2}$ for $\varepsilon \in \{2^{-8}, 2^{-7},\ldots,2^0\}$ and $h \in \{0.1, 1\}$, as well as $h \equiv \varepsilon$. The expected values are estimated with a Monte Carlo simulation using $10^5$ samples. The results given in Figure~\ref{fig_RMSTE} confirm the second order shown in Theorem~\ref{thm:RMSTE}. In the same figure, we also show estimates of the local root mean square truncation error of the second order stochastic Euler dynamics. Those appear to converge at speed $O(\varepsilon^3; \varepsilon \downarrow 0)$, as we would expect from the theory of ODE solvers.

\begin{figure}
    \centering
    \includegraphics[scale = 0.85]{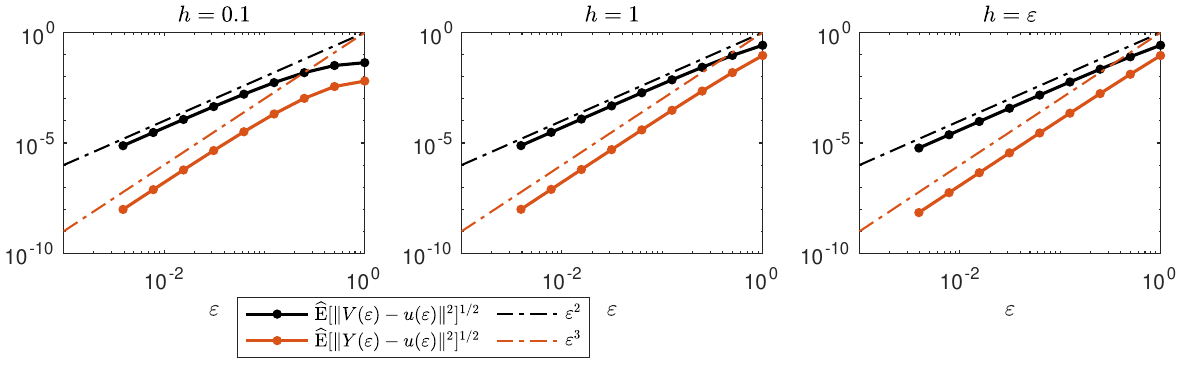}
    \caption{Estimates of the local root mean square truncation error of the stochastic Euler dynamics and second-order stochastic Euler dynamics at multiple time points $\varepsilon > 0$ regarding the linear model problem $u' = -u, u(0) = 1$ for $h \in \{0.1, 1\}$ and $h = \varepsilon$. $\widehat{\mathrm{E}}$ denotes the sample mean and is computed using $10^5$ samples.}
    \label{fig_RMSTE}
\end{figure}

\paragraph{Stability of the stochastic Euler dynamics.}
We simulate the stochastic Euler dynamics for the linear model problem to study their longtime behaviour and verify the result shown in Proposition~\ref{prop_con er}. Indeed, we perform Monte Carlo simulations to estimate $\mathbb{E}[V(t)^2]$ with $t \in \{0,4,\ldots,60\}$ and $h \in \{0.125, 0.25,0.5,1,2\}$ using $10^6$ samples and show the results in Figure~\ref{fig_exponentialrates}. The estimated means in the plot are all sampled independently of each other.
According to Proposition~\ref{prop_con er}, the estimated second moment should stay below the blue $\exp(-2t)$ line for $h \leq 0.25$ and below the green $\exp(-t/(2h))$ line if $h \in [0.25,1)$. Looking at the plots, this appears to be confirmed: there is a slight discrepancy in case $h \in \{0.25, 0.5\}$ for small $t$, which we might be able to explain with the large variance of $V(t)^2$ that leads to a large sampling error. Our theory indicates instability if $h > 1$, which we can see in the case $h = 2$.
\begin{figure}
    \centering
    \includegraphics[scale = 0.82]{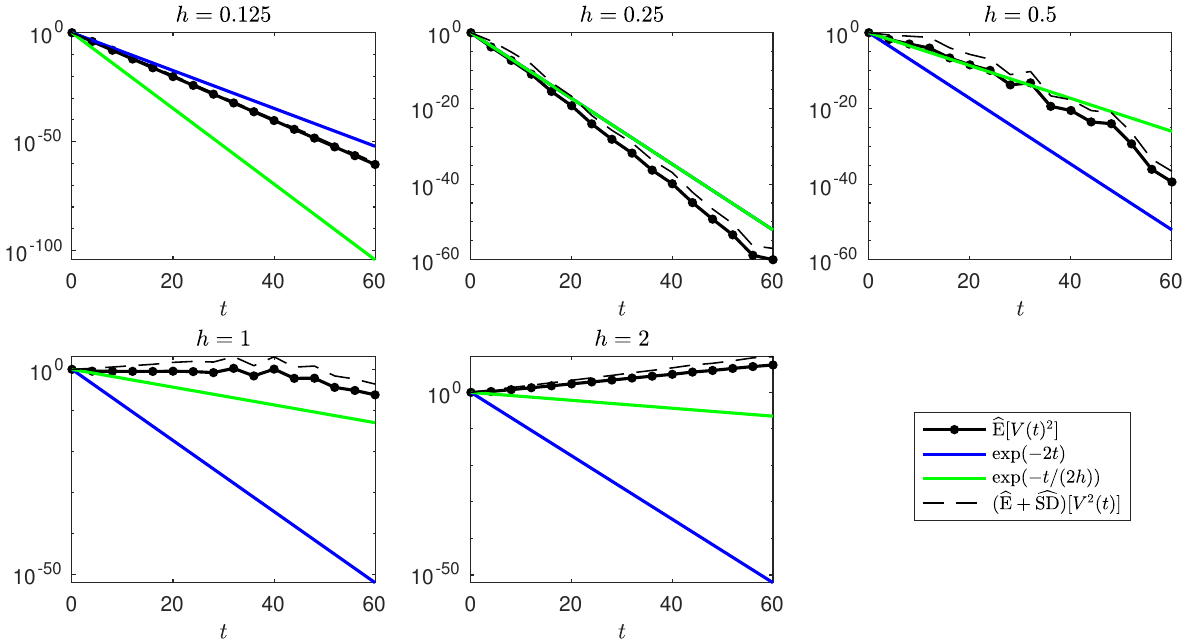}
   \caption{Estimated second moments of the stochastic Euler dynamics of the linear model problem $u' = -u, u(0) = 1$ with large $t$ that show the dynamics' asymptotic behaviour. $\widehat{\mathrm{SD}}$ denotes the sample standard deviation; along with $\widehat{\mathrm{E}}$, it has been computed using $10^6$ samples.}
    \label{fig_exponentialrates}
\end{figure}

\subsection{Underdamped harmonic oscillator} \label{Subs_ExpOsci}
We now consider the underdamped harmonic oscillator $u_1' = u_2, u_2' = -u_1-u_2, u(0) = (1,0)^T$ on $X = \mathbb{R}^2$, see also Figure~\ref{fig:convergence_SED}. We first aim to verify the stability results we have obtained regarding the deterministic Euler dynamics in Theorem~\ref{thm:lin_stab_fo}. We then explore computationally the stability of the stochastic Euler dynamics in this setting; we note that the underdamped harmonic oscillator is not covered by Theorem~\ref{thm_stab_higherdim}.

\paragraph{Stability of the deterministic Euler dynamics.} We simulate the deterministic Euler dynamics regarding the underdamped harmonic oscillator by evaluating the matrix exponential of the matrix $B$ as given in \eqref{eq_B_matrix} at high accuracy. We compute the solution for $t \in \{0, 0.01, \ldots, 40\}$ and $h \in \{0.2, 0.6, 2/3, 0.7\}$ and plot it in Figure \ref{fig_DED_stab}. We see a stable solution for $h < 2/3$ and an unstable solution for $h > 2/3$, as expected from Theorem~\ref{thm:lin_stab_fo}. When choosing $h = 2/3$ we appear to have obtained a dynamical system that is free of friction.
\begin{figure}
    \centering
    \includegraphics[scale = 0.8]{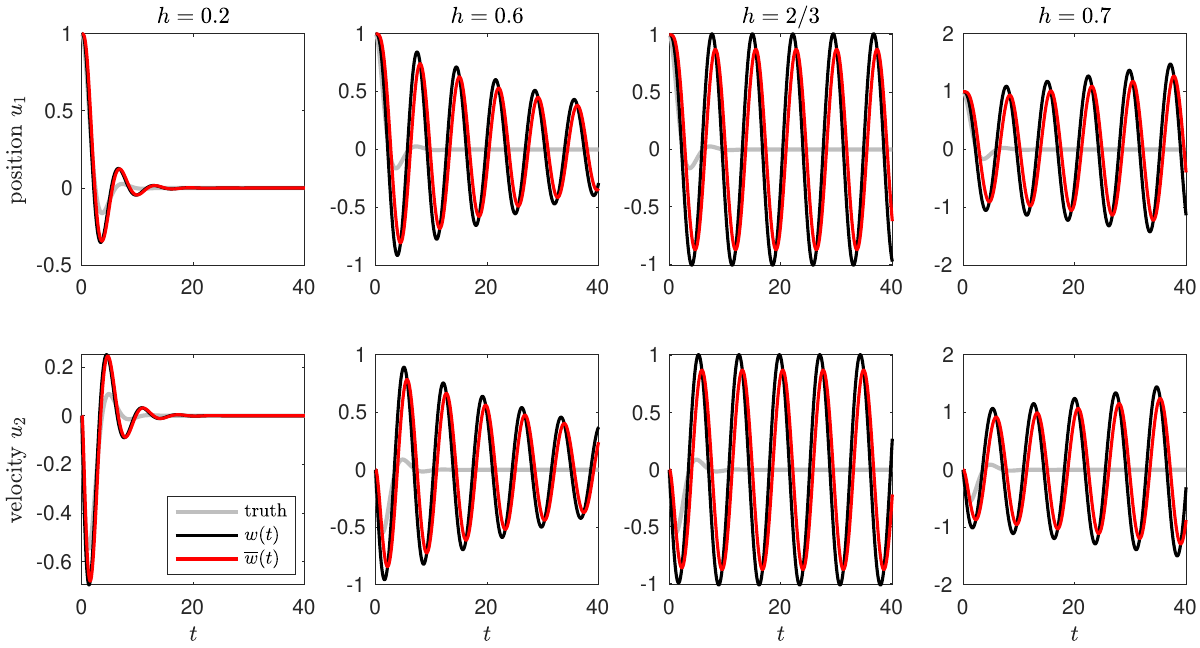}
    \caption{Trajectories of the deterministic Euler dynamics applied to the underdamped harmonic oscillator $u_1' = u_2, u_2' = -u_1-u_2, u(0) = (1,0)^T$ with $h \in \{0.2, 0.6, 2/3, 0.7\}$. }
    \label{fig_DED_stab}
\end{figure}

\paragraph{Stability of the stochastic Euler dynamics.}
We choose the same setup as in the deterministic Euler dynamics above to explore the stability of the stochastic Euler dynamics in the underdamped harmonic oscillator that is not covered by our theory. In Figure~\ref{fig:SED_stab_harmonic}, we plot five independent realisations of the stochastic Euler dynamics regarding the underdamped harmonic oscillator  for each of $h \in \{0.2, 0.6, 2/3, 0.7\}$ on top of each other.
\begin{figure}
    \centering
    \includegraphics[scale = 0.8]{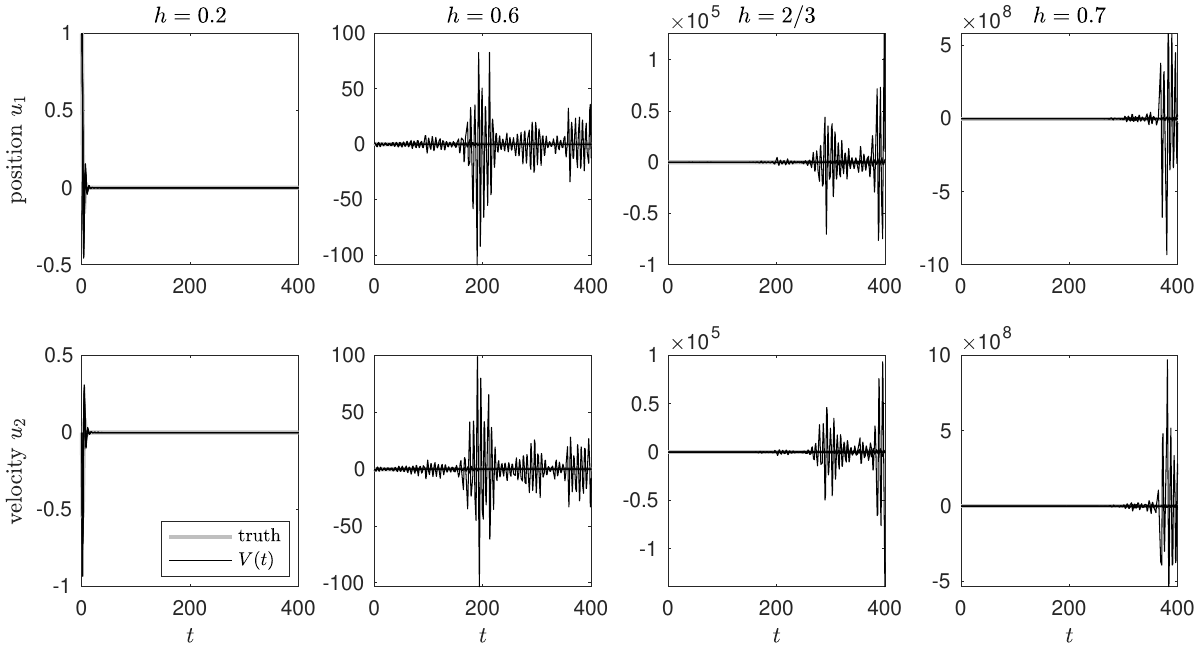}
    \caption{Five realisations of the stochastic Euler dynamics approximating the underdamped harmonic oscillator $u_1' = u_2, u_2' = -u_1-u_2, u(0) = (1,0)^T$ with $h \in \{0.2, 0.6, 2/3, 0.7\}$ .}
    \label{fig:SED_stab_harmonic}
\end{figure}
The shown samples appear to imply a very similar stability behaviour that we have also seen above for the deterministic Euler dynamics; especially stability for small $h$. In Figure~\ref{fig:SED_stab_harmonic_rate}, we study the stability more systematically by a Monte Carlo estimation of $\mathbb{E}[\|V(t)\|^2]$ for $t \in \{0, 50,\ldots,600\}$ with $h \in \{0.2, 0.6, 2/3, 0.7\}$ using $10^5$ samples; again the $\|V(t)\|^2$ are sampled independently for all $t$ and $h$.
There $h \in \{2/3, 0.7\}$ clearly lead to a diverging stochastic Euler dynamics and $h = 0.2$ appears to be very stable. The case $h = 0.6$ that appeared to be eventually stable in  Figure~\ref{fig:SED_stab_harmonic_rate} now appears to be unstable or at least to not match the asymptotic behaviour of the underdamped harmonic oscillator. More work is needed to understand stochastic Euler dynamics in the context of underdamped systems.
\begin{figure}
    \centering
    \includegraphics[scale=0.85]{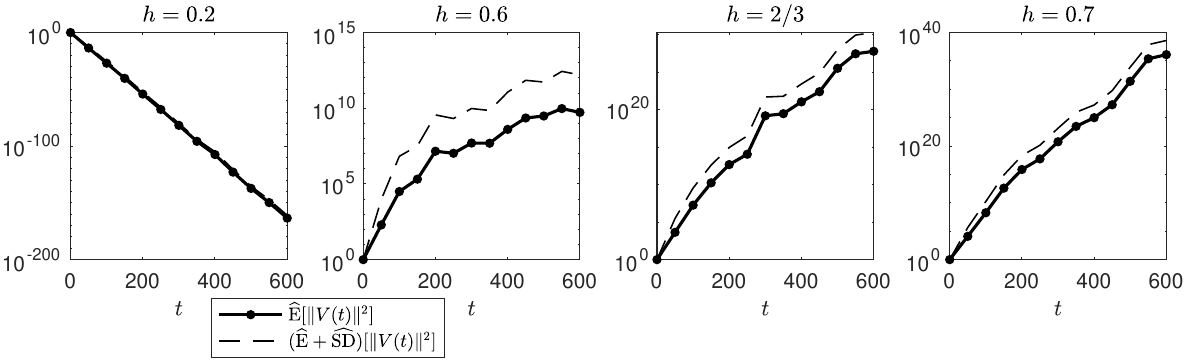}
    \caption{Sample approximations of $\mathbb{E}[\|V(t)\|^2]$ for $t \in \{0, 50,\ldots,600\}$ with $h \in \{0.2, 0.6, 2/3, 0.7\}$ in the context of the underdamped harmonic oscillator $u_1' = u_2, u_2' = -u_1-u_2, u(0) = (1,0)^T$. $\widehat{\mathrm{E}}$ and $\widehat{\mathrm{SD}}$ are computed using $10^5$ samples.}
    \label{fig:SED_stab_harmonic_rate}
\end{figure}

\section{Conclusions and outlook} \label{Sec_Conclusions}
In this work, we have proposed and analysed the stochastic Euler dynamics, a continuous-time Markov process ansatz for the numerical approximation of ordinary differential equations. This ansatz is given as a linear spline interpolation of a random timestep Euler method with i.i.d.\ exponentially distributed timestep sizes. The stochastic Euler dynamics have allowed us to derive novel convergence and stability results regarding random timestep Euler methods: we could prove weak convergence of the stochastic Euler dynamics to the underlying ODE, derive the local root mean square truncation error in the case of linear ODEs, and show stability for certain classes of linear ODEs. In the latter case, we could also compute bounds on the continuous rate of convergence to stationarity of the stochastic Euler dynamics. Numerical experiments helped us to verify these results. We also propose and study deterministic and second-order stochastic Euler dynamics. The former arise from an approximation of the infinitesimal operator of the stochastic Euler dynamics; they recover several properties of the forward Euler method and may be of independent interest.

Our discussion of the second-order stochastic Euler dynamics reveals the difficulty of employing higher order methods -- we have especially noticed stability problems that should be even more pronounced when further increasing the polynomial  order of the splines. Thus, more work is needed in this direction; also and especially when moving to  random timestep Runge--Kutta methods. Moreover, whilst we have focused on rather simple ODEs in this work, random timestep numerical methods are of particular interest in chaotic and irregular dynamical systems. Especially in the context of the randomised discretisation of chaotic dynamical systems, we anticipate the stochastic Euler dynamics to be a helpful tool.

\section*{Acknowledgements}
The author would like to thank the Isaac Newton Institute for Mathematical Sciences for support and hospitality during the programme \emph{The mathematical and statistical foundation of future data-driven engineering} when work on this paper was undertaken (EPSRC grant EP/R014604/1).
\bibliography{library}
\bibliographystyle{plain}
\end{document}